% This text is proprietary.
% It's a part of presentation made by myself.
% It may not used commercial.
% The noncommercial use such as private and study is free
% Author: Yogesh Awate
% additional usepackage{beamerthemewarsawdefaultdefault} is used
\title{Improved theoretical guarantees regarding a class of two-row cutting planes}
\author{
Yogesh Awate \\
Tepper School of Business\\
Carnegie Mellon University\\
Pittsburgh, PA 15213
% \and
% Yossi Gil\\
% Department of Computer Science\\
% Technion---Israel Institute of Technology\\
% Technion City, Haifa 32000, \underline{Israel}
}
\date{January 18, 2012}
\documentclass[11pt]{article}
\usepackage{graphicx}   % for figures
\usepackage {amsmath,amssymb,graphicx,fullpage,epsfig,formatfig,url}
\usepackage{amsmath, amsthm, amssymb}
\usepackage{amsmath, fullpage}
\usepackage{amsfonts}
\usepackage{amssymb}
\usepackage{graphicx}
\newtheorem{theorem}{Theorem}[section]
\newtheorem{lemma}[theorem]{Lemma}

\newtheorem{corollary}[theorem]{Corollary}

\begin{document}
\maketitle
\begin{abstract}
The corner polyhedron is described by minimal valid inequalities from maximal lattice-free convex sets. For the Relaxed Corner Polyhedron (RCP) with two free integer variables and any number of non-negative continuous variables, it is known that such facet defining inequalities arise from maximal lattice-free splits, triangles and quadrilaterals. We improve on the tightest known upper bound for the approximation of the RCP, purely by minimal valid inequalities from maximal lattice-free quadrilaterals, from 2 to 1.71. We generalize the tightest known lower bound of 1.125 for the approximation of  the RCP, purely by minimal valid inequalities from maximal lattice-free triangles, to an infinite subclass of quadrilaterals. 
\end{abstract}
\section{Introduction}

\begin{figure}
\begin{center}
 \includegraphics[height=6cm]{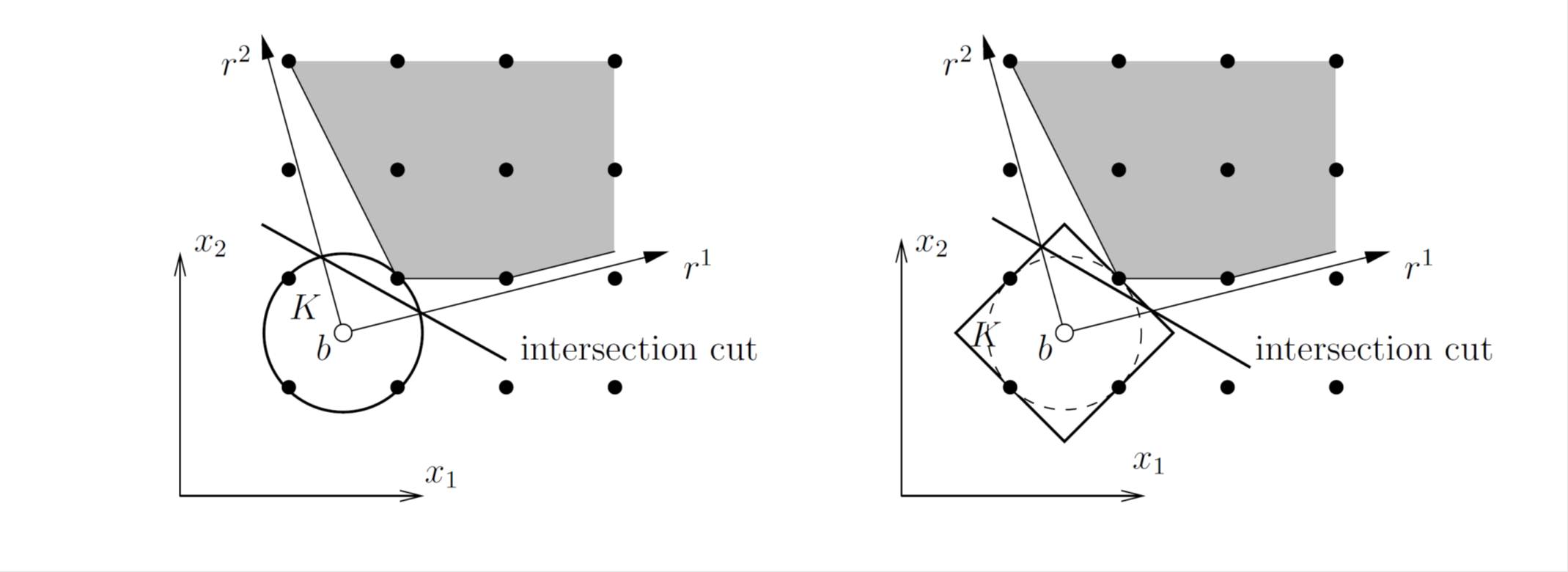}
\end{center}
\caption{Correspondence between intersection cuts and facets of the corner polyhedron. (Cornu{\'e}jols \emph{et al.}~\cite{Ipbible}).}
\label{IC_CP}
\end{figure}

\paragraph{}
Corner polyhedra, introduced by Gomory~\cite{Gomory69}, are obtained by relaxing the non-negativity restrictions on the set of basic variables of a mixed-integer linear program (MILP). This means we can drop the constraints for all continuous basic variables. Any inequality valid for the corner polyhedron is valid for the original polyhedron. Moreover, the set of nontrivial facets of the corner polyhedron is the same (see Figure \ref{IC_CP} for intuition) as the set of undominated intersection cuts  (Cornu{\'e}jols \emph{et al.}~\cite{Ipbible}). The recent paper by Andersen \emph{et al.}~\cite{AndersenLWW07} has led to a renewed interest in corner polyhedra as a tool to generate stronger multi-row cuts as opposed to traditional single-row cuts. The interested reader may refer to Cornu{\'e}jols \emph{et al.}~\cite{Ipbible} for a comprehensive review of corner polyhedra.

We consider the problem in  Andersen \emph{et al.}~\cite{AndersenLWW07} , which has two integer constrained basic variables and any number of nonbasic nonzero continuous variables i.e. MILP: $ x = f + \sum_{j = 1}^{ k} r_js_j, f \in \mathbb{Q}^2 \setminus \mathbb{Z}^2, k \geq 1, x \in \mathbb{Z}^2 , s \in \mathbb{R}_+^k$. We define
$R_f(r_1,\dots,r_k) = \text{Conv}(s \in \mathbb{R}_+^k: f + \sum_{j = 1}^{ k} r_js_j\in \mathbb{Z}^2)$.  It is important to note that the traditional Gomory corner polyhedron with two free integer basic variables would allow for both continuous and discrete nonbasic variables, whereas the above formulation allows only for continuous nonbasic variables. Hence, we call $R_f(r_1,\dots,r_k)$ the Relaxed Corner Polyhedron (RCP), following the notation of Basu  \emph{et al.}~\cite{DBLP:conf/ipco/BasuCM11}.
It is useful to consider the semi-infinite relaxation given by
$R_f = \text{Conv}(s \in \mathbb{R}_+^{\infty}: f + \sum_{r \in \mathbb{Q}^2} rs_r\in \mathbb{Z}^2)$ because the minimal valid inequalities for $R_f$ are known in terms of the minimal valid functions (a.k.a. gauge functions) of maximal lattice-free convex sets containing $f$ in their interior. 
\vspace{3pt}
Any maximal lattice-free (MLF) convex set $B$ defines a minimal valid function $\psi_B: \mathbb{R}^2 \rightarrow \mathbb{R}_+$ such that
$\psi_B(0) = 0$ and $\psi_B(x - f) = 1$ $ \forall x \in \text{Boundary}(B)$. If $\exists \lambda \geq 0$ s.t. $ f+ \lambda r \in \text{Boundary}(B)$, then $\psi_B(r) = 1/\lambda$ else $\psi_B(r) = 0$.
$R_f = \bigcap_{B} ( \sum_{ r \in \mathbb{Q}^2} \psi_B(r)s_r \geq 1)$. A restriction of these infinite dimensional minimal valid functions to the set of extreme rays of the RCP gives minimal valid functions for the finite dimensional RCP i.e.
$R_f(r_1,\dots,r_k) = \bigcap_{ B} ( \sum_{ j = 1}^{k} \psi_B(r_j)s_j \geq 1)$  (Borozan  and Cornu\'{e}jols~\cite{borozan:minimal}). 
Theorems 1.1, 1.2 and 1.3 encapsulate some key properties of maximal lattice-free convex sets.
\begin{theorem}
(Doignon~\cite{doig73}, Bell~\cite{bell77} and Scarf~\cite{scarf77}): Any full dimensional maximal lattice-free convex set $K \subseteq \mathbb{R}^p$ has at most $2^p$ facets.
\end{theorem}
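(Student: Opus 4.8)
The plan is to argue by contradiction, combining a structural consequence of maximality with a pigeonhole argument on parity classes in $\mathbb{Z}^p$. First I would isolate the two ingredients that make the maximality hypothesis bite. (i) A maximal lattice-free convex set is a polyhedron (Lov\'asz), so that speaking of its facets is meaningful; if one prefers not to quote this, one can at least reduce to the polyhedral case. (ii) By maximality, the relative interior of every facet $F$ of $K$ must contain a point of $\mathbb{Z}^p$: otherwise the facet-defining inequality of $F$ could be relaxed by a small amount to produce a strictly larger convex set that is still lattice-free, contradicting maximality. I would record (ii) as a one-line lemma with this ``push the facet outward'' argument. Full-dimensionality of $K$ guarantees $\mathrm{int}(K)\neq\emptyset$ and that facets have dimension $p-1$, so these statements make sense.

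Next, suppose for contradiction that $K$ has facets $F_1,\dots,F_m$ with $m>2^p$. For each $i$ choose $z_i\in\mathbb{Z}^p$ in the relative interior of $F_i$; these points are pairwise distinct, since a point in the relative interior of one facet lies on no other facet. Since $\mathbb{Z}^p$ has exactly $2^p$ residue classes modulo $2$, by pigeonhole two of the $z_i$, say $z_1$ and $z_2$, agree coordinatewise mod $2$. Then $z:=\tfrac12(z_1+z_2)\in\mathbb{Z}^p$, and $z\in K$ by convexity, with $z\neq z_1$ and $z\neq z_2$.

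The crux is then to show $z\in\mathrm{int}(K)$, which contradicts lattice-freeness. For this I would invoke the elementary fact that, for a polyhedron, the open segment joining a relative-interior point of a facet $F_1$ to a relative-interior point of a distinct facet $F_2$ lies in $\mathrm{int}(K)$: testing each facet inequality $a^\top x\le b$ of $K$ at $z$, if it is the inequality of $F_1$ it is tight at $z_1$ but strict at $z_2$ (as $z_2\notin F_1$), hence strict at $z$; symmetrically if it is the inequality of $F_2$; and for every other facet it is strict at both endpoints, hence strict at $z$. Thus $z$ satisfies all facet inequalities strictly and is interior, so $K$ contains an integer point in its interior --- a contradiction. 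Therefore $m\le 2^p$.

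I expect the main obstacle to be ingredient (ii): justifying cleanly that maximality forces a lattice point into the relative interior of every facet, as opposed to merely onto the boundary of $K$. The pigeonhole and the ``open segment is interior'' steps are short once (ii) and polyhedrality are available; the only other point requiring a little care is the reduction from a general maximal lattice-free convex set to a polyhedron, if one does not simply cite Lov\'asz's characterization.
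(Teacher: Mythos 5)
The paper does not prove this statement: it is quoted as a classical result of Doignon, Bell and Scarf, so there is no in-paper argument to compare against. Your proposal is the standard proof of that classical theorem, and its skeleton is sound: polyhedrality, a lattice point in the relative interior of each facet, pigeonhole on the $2^p$ residue classes of $\mathbb{Z}^p$ modulo $2$, and the observation that the midpoint of relative-interior points of two distinct facets satisfies every facet inequality strictly and hence lies in $\mathrm{int}(K)$, contradicting lattice-freeness. The pigeonhole and midpoint steps are airtight as you state them (the $z_i$ are distinct because relative interiors of distinct facets are disjoint, and the midpoint of two lattice points agreeing mod $2$ is integral).

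The one place where your ``one-line lemma'' is genuinely optimistic is ingredient (ii), and you are right to flag it. For a \emph{bounded} $K$ the push-the-facet-outward argument closes cleanly: any new interior lattice point of the relaxed set lies in a bounded region and satisfies $b < a^\top z < b+\epsilon$, and since only finitely many lattice points are in play you can choose $\epsilon$ below the minimum positive slack. For an \emph{unbounded} $K$, however, there may be infinitely many lattice points outside $K$ with $a^\top z - b$ accumulating at $0$, so no single $\epsilon$ works and the relaxed set need not be lattice-free; one then has to use additional structure (e.g., that the recession cone of a maximal lattice-free set is rational, or a reduction modulo the lineality space) to rescue the claim. Since the theorem as stated allows arbitrary full-dimensional maximal lattice-free convex sets, you should either restrict attention to the bounded case and handle unbounded sets separately, or cite the structural facts about maximal lattice-free polyhedra that make the outward perturbation legitimate. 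With that repair, and with polyhedrality either cited or established, the argument is complete.
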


\begin{theorem}
(Lov{\'a}sz~\cite{lov89}): In the plane, a maximal lattice-free convex set with a nonempty
interior is one of the following:
\newline
i) A split $c \leq ax_1 + bx_2 \leq c + 1$ where $a$, $b$ are coprime, $c$ is an integer,
\newline
ii) A triangle with an integral point in the interior of each of its edges,
\newline
iii) A quadrilateral containing exactly four integral points, with exactly one of them in
the interior of each of its edges.
\label{Lov_MLFCS}
\end{theorem}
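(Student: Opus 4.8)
The plan is to determine $B$ in three stages: first reduce to the case that $B$ is closed and, in the bounded case, a polytope each of whose edges carries a lattice point in its relative interior; then dispose of the unbounded case (which forces a split); and finally treat the bounded case (which forces a triangle or a quadrilateral). First I would note that the closure $\bar B$ has the same interior as $B$ (both are full-dimensional convex), hence is lattice-free, so maximality gives $B=\bar B$; thus $B$ is closed. The structural fact I will lean on in the bounded case is: \emph{a bounded maximal lattice-free convex set is a polytope, and the relative interior of each of its edges contains a point of $\mathbb Z^2$.}

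I would prove this structural fact by a ``push-out'' argument. If $p$ is an exposed point of $B$ with $p\notin\mathbb Z^2$, let $\ell$ be a supporting line with $\ell\cap B=\{p\}$ and pick $q$ just beyond $\ell$, near $p$; then $B':=\mathrm{conv}(B\cup\{q\})\supsetneq B$, and $B'\setminus B$ is a small ``cap'' near $p$ which, for $q$ close enough, contains no lattice point (as $\mathbb Z^2$ is discrete and $p\notin\mathbb Z^2$) and reaches no boundary lattice point of $B$ --- so $B'$ is still lattice-free, contradicting maximality. Hence every exposed point of $B$ lies in $\mathbb Z^2$; since $B$ is bounded there are finitely many, so by Straszewicz's theorem $B$ has finitely many extreme points and is a polytope. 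Running the same push-out along a whole edge $e$ with $\mathrm{relint}(e)\cap\mathbb Z^2=\emptyset$ (the enlargement stays near $e$, whose only possible lattice points are its two endpoints, which remain vertices of $B'$) again contradicts maximality, so each edge meets $\mathbb Z^2$ in its relative interior. \emph{This step is the heart of the proof and where I expect the most care will be needed}: the cap argument is geometrically transparent, but verifying that $B'$ is convex and genuinely lattice-free must be done honestly.

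For the unbounded case I would argue directly. The recession cone of $B$ is nonzero; it cannot be two-dimensional, since then a translate of a full-dimensional cone would lie in $\mathrm{int}(B)$ and contain lattice points far from the apex. So it is a ray or line through a direction $d\ne 0$, and $d$ must be rational: otherwise the image of a ray $x_0+\mathbb R_+d$ (with $x_0\in\mathrm{int}(B)$) in $\mathbb R^2/\mathbb Z^2$ is dense (Kronecker), so the open tube of positive radius around that ray, which sits in $\mathrm{int}(B)$, meets $\mathbb Z^2$. After a unimodular change of variables take $d=e_2$; the projection of $B$ to the $x_1$-axis is then an interval $I$, and a half-strip argument shows $|I|\le 1$ (if $|I|>1$ an integer lies strictly inside $I$, and together with unboundedness in direction $e_2$ this forces a lattice point into $\mathrm{int}(B)$). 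Since $I$ has length at most $1$ and no interior integer, $B$ lies in a strip $c\le x_1\le c+1$ with $c\in\mathbb Z$; this strip is lattice-free, so maximality gives equality, which is case~(i) after undoing the coordinate change.

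For the bounded case, the structural fact makes $B$ a polytope with a relative-interior lattice point on each edge; being bounded and full-dimensional it has at least three edges, and by Theorem~1.1 (Doignon, Bell and Scarf) at most $2^2=4$. Three edges give a triangle of type~(ii). Four edges give a quadrilateral with a lattice point $a_i$ in the relative interior of each edge $e_i$; lattice-freeness already forbids interior lattice points, so it remains to show there are no others. A residue argument does most of this: no two $a_i$ are congruent modulo $2$ (else the midpoint of the segment joining them --- a lattice point --- lies in $\mathrm{int}(B)$, since the two points are on distinct edges), so the $a_i$ realize all four classes of $(\mathbb Z/2)^2$; a further lattice point $z$ is then congruent modulo $2$ to exactly one $a_i$, and the midpoint argument forces $z$ onto the edge $e_i$ itself, and --- repeating it --- out of its relative interior, so the only remaining possibility is a lattice point at a vertex. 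That I would rule out as follows: at such a vertex the two primitive edge directions must form a basis of $\mathbb Z^2$ (otherwise the parallelogram they span already contributes a lattice point to $\mathrm{int}(B)$, via Pick's theorem and the $180^\circ$ symmetry of the parallelogram), so after a unimodular normalization sending the vertex to the origin and the two neighbouring edge-lattice-points to $(1,0)$ and $(0,1)$, one checks that lattice-freeness together with the requirement that the remaining two edges also each carry a relative-interior lattice point is unsatisfiable. This last check is the fussiest piece of the argument but is finite and elementary, and it leaves exactly four lattice points, one per edge --- case~(iii) --- completing the classification.
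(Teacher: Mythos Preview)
The paper does not prove this theorem; it is quoted from Lov\'asz as a background result, with no argument given. So there is nothing to compare against, and your proposal has to be assessed on its own.

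Your overall architecture --- close up, split into unbounded and bounded cases, in the bounded case show $B$ is a polytope with a relative-interior lattice point on every edge, then invoke Doignon--Bell--Scarf to bound the number of edges by four --- is standard and sound, and your treatment of the unbounded case is essentially correct. The genuine gap is in how you argue that a bounded maximal lattice-free set is a polytope. You assert that every exposed point of such a $B$ lies in $\mathbb Z^2$; this is \emph{false}. Any Type-3 triangle (Theorem~1.3) has all three vertices fractional: for instance the triangle with vertices $(-\tfrac{2}{3},\tfrac{1}{3})$, $(\tfrac{1}{3},\tfrac{4}{3})$, $(\tfrac{4}{3},-\tfrac{2}{3})$ is maximal lattice-free (its edges carry $(0,1)$, $(0,0)$, $(1,0)$ in their relative interiors) and has no integral vertex. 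Your push-out argument breaks exactly where you flag it as delicate. Taking $B'=\mathrm{conv}(B\cup\{q\})$ with $q$ just beyond a supporting line at an exposed point $p$ does \emph{not} attach a small cap localized near $p$: the two support lines from $q$ to $B$ touch $B$ near the \emph{far} endpoints of the edges incident to $p$, so essentially the entire relative interiors of those two edges --- not merely a neighbourhood of $p$ --- move into $\mathrm{int}(B')$. Any lattice point sitting on one of those edges (and in a Type-3 triangle there is one on each) is then interior to $B'$, so $B'$ is not lattice-free and no contradiction with maximality is obtained.

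The repair is to prove polytopality by another route --- for example, for each $z\in\mathbb Z^2$ choose a closed half-plane $H_z\supseteq B$ with $z\in\partial H_z$, note that $\bigcap_z H_z$ is lattice-free and contains $B$, hence equals $B$ by maximality, and then argue that only finitely many $H_z$ are needed --- and only afterwards run your \emph{edge} push-out. That one does work: translating a single edge outward parallel to itself merely extends the two neighbouring edges along their own supporting lines, so lattice points on those neighbours remain on the boundary of the enlarged set. After that, the edge-count via Theorem~1.1 and your mod-$2$ analysis of the quadrilateral case can proceed as you outlined.
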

Figure 2 shows the three types of maximal lattice-free convex sets.

\begin{figure}
\begin{center}
 \includegraphics[height=5cm]{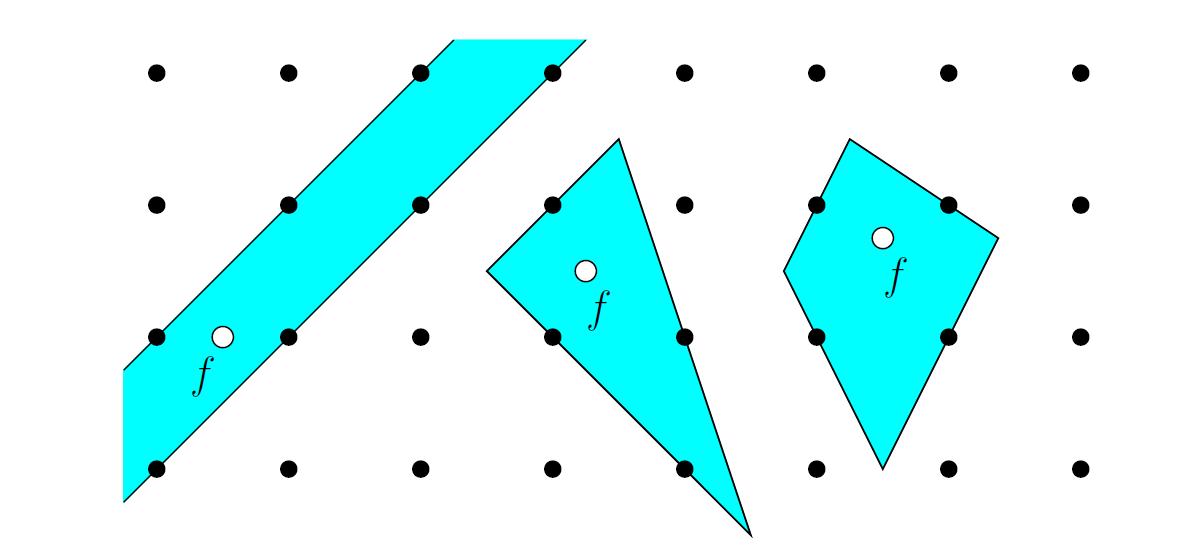}
\end{center}
\caption{Maximal lattice-free convex sets with nonempty interior in $\mathbb{R}^2 \hspace{4pt} $(Cornu{\'e}jols \emph{et al.}~\cite{Ipbible}).}
\end{figure}

\begin{theorem}
(Dey and Wolsey~\cite{Dey08})
The maximal lattice-free triangles are of three types: \newline
(i) Type-1 triangles: triangles with integral vertices and exactly one integral point in the
relative interior of each edge, \newline
(ii) Type-2 triangles: triangles with at least one fractional vertex v, exactly one integral
point in the relative interior of the two edges incident to v and at least two integral
points on the third edge, \newline
(iii) Type-3 triangles: triangles with exactly three integral points on the boundary, one in
the relative interior of each edge.
\end{theorem}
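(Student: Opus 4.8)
The plan is to bootstrap from Lov\'asz's classification. By Theorem~\ref{Lov_MLFCS}, a maximal lattice-free triangle $T$ has no lattice point in its interior and carries at least one lattice point in the relative interior of each of its three edges; the whole question is then to determine which vertices of $T$ are lattice points and how many lattice points sit in the relative interior of each edge. I would run the argument as a case split on $V$, the number of lattice vertices of $T$, which lies in $\{0,1,2,3\}$.

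First I would establish two local facts. \emph{(A) The edge $e$ opposite a lattice vertex $p$ of $T$ contains exactly one lattice point in its relative interior.} Suppose $e$ carried consecutive relative-interior lattice points $m_1,\dots,m_j$ with $j\ge 2$. The open segments $(p,m_1)$ and $(p,m_j)$ lie in $\operatorname{int} T$ and so contain no lattice point; hence $m_1-p$ and $m_j-p$ are primitive, the triangle $\operatorname{conv}\{p,m_1,m_j\}$ has no interior lattice point and exactly $j+1$ boundary lattice points, and Pick's theorem gives its area as $(j-1)/2$. Comparing with $\tfrac12|(m_1-p)\times(m_j-p)|$ forces the primitive direction of $e$ and $m_1-p$ to be a lattice basis; after the corresponding unimodular change of coordinates (with $p$ at the origin), $e$ lies on the line at lattice height $1$, so the two edges of $T$ at $p$ run from height $0$ to height $1$ and contain no relative-interior lattice point at all --- contradicting Theorem~\ref{Lov_MLFCS}. \emph{(B) Of the two edges at a lattice vertex $p$, at most one has two or more relative-interior lattice points.} If both did, the analogous ``unimodular little triangle'' computation shows the two primitive edge directions at $p$ form a lattice basis; normalising $p$ to the origin with these directions $(1,0)$ and $(0,1)$, the two edges reach beyond $(2,0)$ and $(0,2)$, so $T$ contains points $(s,0)$ and $(0,s')$ with $s,s'>2$ and hence contains $(1,1)$ in its interior --- impossible.

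Granting (A) and (B), the assembly is short. If $V=3$, (A) at each edge gives exactly one relative-interior lattice point per edge, so $T$ is Type-1. If $V=2$, with fractional vertex $v$, (A) at the two edges through $v$ gives exactly one relative-interior lattice point on each, while the third edge joins two lattice vertices and so has at least two lattice points on it; $T$ is Type-2. If $V=1$, with lattice vertex $p$ and fractional vertices $v,v'$, (A) at the edge $[v,v']$ (opposite $p$) makes it carry exactly one relative-interior lattice point, and (B) at $p$ together with Theorem~\ref{Lov_MLFCS} leaves at least one of $[p,v]$, $[p,v']$ --- say $[p,v']$ --- with exactly one; then the fractional vertex $v'$ has exactly one relative-interior lattice point on each incident edge, and the opposite edge $[p,v]$ has at least two lattice points (the vertex $p$ and a relative-interior one), so $T$ is Type-2.

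The case $V=0$ is the one I expect to be the real obstacle, precisely because there is no lattice vertex to anchor (A) and (B). Here I would first note that picking one relative-interior lattice point on each edge gives three lattice points that are pairwise primitive differences (the open segment between points on two adjacent edges lies in $\operatorname{int} T$), hence a unimodular triangle, which a unimodular transformation puts at $(0,0),(1,0),(0,1)$ --- one on the relative interior of each edge. If every edge of $T$ contains exactly one relative-interior lattice point, then $T$ has exactly three boundary lattice points and is Type-3. Otherwise some edge carries two or more, and it remains to show that \emph{at most one} edge can; $T$ is then Type-2 with the vertex opposite that edge as the distinguished fractional vertex. To prove this last point I would again use the mechanism of (A) and (B): assuming two edges sharing a vertex each have two or more relative-interior lattice points, a computation with their primitive directions together with Pick's theorem applied to the corner triangles cut off at the shared vertex pins the configuration down tightly enough that, in the normalised coordinates, one of the nearby lattice points is forced into $\operatorname{int} T$, contradicting lattice-freeness. (If one also wants the converse --- that each of the three listed families does consist of maximal lattice-free triangles --- that is immediate from Theorem~\ref{Lov_MLFCS}, since each listed triangle is lattice-free with a lattice point in the relative interior of every edge.)
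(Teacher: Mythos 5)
This statement is quoted from Dey and Wolsey and the paper offers no proof of it, so there is nothing internal to compare your argument against; I can only assess it on its own terms. Your derivation from Theorem~\ref{Lov_MLFCS} is sound where it is carried out in full. Claim (A) is correct: Pick's theorem on $\operatorname{conv}\{p,m_1,m_j\}$ does force $|(m_1-p)\times d|=1$, and in the normalized coordinates both edges at $p$ run from lattice height $0$ to height $1$ and so violate the Lov\'asz requirement of a relative-interior lattice point on every edge. Claim (B) is also correct (and, as you implicitly use, the unimodularity of the two primitive corner directions at a lattice vertex needs only \emph{one} relative-interior lattice point per edge, which Lov\'asz supplies; the hypothesis of two is what pushes the edges past $(2,0)$ and $(0,2)$ and traps $(1,1)$). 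The assembly over $V\in\{1,2,3\}$ is clean, including the correct reading that the ``at least two integral points on the third edge'' in the Type-2 definition may be supplied by an integral endpoint.

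The only incomplete step is the one you flag yourself: in the $V=0$ case, excluding two edges that share a (fractional) vertex $q$ and each carry two or more relative-interior lattice points. Your proposed mechanism does close this, and it is worth recording how, since $m_1-q$ is no longer a lattice vector: apply Pick to $\operatorname{conv}\{m_1,m_2,n_1\}$ and $\operatorname{conv}\{n_1,n_2,m_1\}$ (all five points are pairwise joined by segments free of further lattice points) to get two unimodular triangles; normalizing $m_1=(0,0)$, $m_2=(1,0)$, $n_1=(0,1)$ forces $n_2=(1,k)+(0,1)$ for some integer $k\ge 1$ and places $q$ at $(-1/k,0)$. The case $k=1$ makes $q$ integral, contradicting $V=0$, and for $k\ge 2$ the triangle $\operatorname{conv}\{q,\,m_2^+,\,n_2^+\}\subseteq T$ (with $m_2^+$, $n_2^+$ slightly beyond $m_2$, $n_2$ on their edges, which exist because $m_2$ and $n_2$ are relative-interior points) contains $(1,1)$ in its interior, contradicting lattice-freeness. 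With that computation written out, your proof is complete; as it stands, this sub-case is a sketch rather than a proof, but the sketch points at an argument that genuinely works.
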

Figure 3 shows the three types of maximal lattice-free triangles.

We define the split closure $S_f (r_1,\dots, r_k)$, triangle closure $T_f (r_1,\dots, r_k)$ and quadrilateral closure $Q_f (r_1,\dots, r_k)$ as follows. 
\begin{equation}
S_f (r_1,\dots, r_k) \hspace{4pt} =  \bigcap_{ B:  B \hspace{4pt}is \hspace{4pt} a \hspace{4pt} maximal \hspace{4pt} lattice-free \hspace{4pt}split} (\hspace{4pt} \sum_{ j = 1}^{k} \psi_B(r_j)s_j \geq 1) .
\end{equation}
\begin{equation}
T_f (r_1,\dots, r_k) \hspace{4pt} = \bigcap_{ B:  B \hspace{4pt}is \hspace{4pt} a \hspace{4pt} maximal \hspace{4pt} lattice-free \hspace{4pt}triangle} (\hspace{4pt} \sum_{ j = 1}^{k} \psi_B(r_j)s_j \geq 1 ).
\end{equation}
\begin{equation}
Q_f (r_1,\dots, r_k) \hspace{4pt} =  \bigcap_{ B:  B \hspace{4pt}is \hspace{4pt} a \hspace{4pt} maximal \hspace{4pt} lattice-free \hspace{4pt}quadrilateral} (\hspace{4pt} \sum_{ j = 1}^{k} \psi_B(r_j)s_j \geq 1) .
\end{equation}

\begin{figure}
\begin{center}
 \includegraphics[height=6cm]{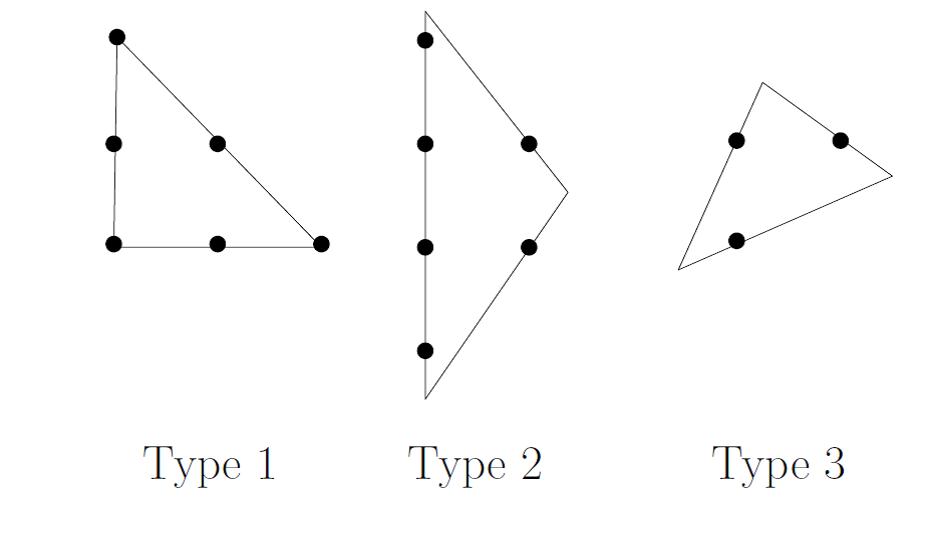}
\caption{Three types of maximal lattice-free triangles (Cornu{\'e}jols \emph{et al.}~\cite{Ipbible}).}
\end{center}
\end{figure}

Theorem \ref{Lov_MLFCS} gives us the following result.

\begin{theorem}
$R_f (r_1,\dots, r_k) = S_f (r_1,\dots, r_k)\cap T_f (r_1,\dots, r_k) \cap Q_f (r_1,\dots, r_k)$.
\vspace{5pt}
\end{theorem}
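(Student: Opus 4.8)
The plan is to obtain the statement as an immediate corollary of the Borozan--Cornu\'{e}jols representation recalled above together with the Lov\'{a}sz trichotomy (Theorem~\ref{Lov_MLFCS}). First I would invoke the representation $R_f(r_1,\dots,r_k) = \bigcap_{B}\,\bigl(\sum_{j=1}^{k}\psi_B(r_j)s_j \geq 1\bigr)$, where $B$ ranges over all maximal lattice-free convex sets of $\mathbb{R}^2$ that contain $f$ in their interior (these are exactly the sets producing the minimal valid inequalities for the RCP). Since $f$ lies in the interior of each such $B$, every $B$ in this family is full-dimensional, so Theorem~\ref{Lov_MLFCS} applies verbatim: each $B$ is a split, a triangle with an integral point in the relative interior of each edge, or a quadrilateral with exactly four integral points placed one per edge.

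Second, I would verify the converse on the level of index families: every maximal lattice-free split, triangle, or quadrilateral is in particular a maximal lattice-free convex set, and the ones contributing a nontrivial minimal valid function $\psi_B$ are precisely those containing $f$ in their interior --- the very sets indexing the intersection for $R_f(r_1,\dots,r_k)$. Consequently the index family of that intersection is the disjoint union of the index families defining $S_f(r_1,\dots,r_k)$, $T_f(r_1,\dots,r_k)$, and $Q_f(r_1,\dots,r_k)$. Since intersecting over a disjoint union of index sets equals the intersection of the three partial intersections, this yields $R_f(r_1,\dots,r_k) = S_f(r_1,\dots,r_k)\cap T_f(r_1,\dots,r_k)\cap Q_f(r_1,\dots,r_k)$, which is the claim.

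I do not expect any substantive obstacle: essentially all of the content is carried by Theorem~\ref{Lov_MLFCS}, and what remains is careful bookkeeping about the scope of the index set. The two points to keep straight are (i) that no maximal lattice-free convex set with $f$ in its interior can escape the trichotomy --- this is exactly where the full-dimensionality forced by $f \in \mathrm{int}(B)$ is used, since lower-dimensional lattice-free sets, which are absent from Lov\'{a}sz's list, cannot contain $f$ in their interior; and (ii) that degenerate configurations (a quadrilateral collapsing to a triangle, or a triangle collapsing to a split) must not cause an inequality to disappear from all three closures or to be counted in a way that spuriously alters the intersection. Such degenerate sets are either not maximal lattice-free, and hence irrelevant, or already belong to the appropriate lower-complexity family, so the decomposition into the three closures is exact.
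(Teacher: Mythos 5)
Your proposal is correct and follows essentially the same route as the paper, which derives this result directly from the Lov\'{a}sz classification (Theorem~\ref{Lov_MLFCS}) applied to the Borozan--Cornu\'{e}jols intersection representation of $R_f(r_1,\dots,r_k)$; you have merely spelled out the index-set bookkeeping that the paper leaves implicit.
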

It is easy to see that maximal lattice-free splits are a limiting case of maximal lattice-free triangles, quadrilaterals. The following theorems follow (see Basu \emph{et al.}~\cite{BasuBCM11} for a rigorous proof).
\begin{theorem}
(Basu \emph{et al.}~\cite{BasuBCM11}): $T_f (r_1,\dots, r_k) \subseteq S_f (r_1,\dots, r_k), Q_f (r_1,\dots, r_k) \subseteq S_f (r_1,\dots, r_k)$.
\end{theorem}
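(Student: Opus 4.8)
The plan is to deduce both inclusions from the remark already made, that a split is a limiting case of triangles and of quadrilaterals. Concretely, I will show that every inequality defining the split closure $S_f(r_1,\dots,r_k)$ is a consequence of the inequalities defining the triangle closure $T_f(r_1,\dots,r_k)$, and, separately, of those defining the quadrilateral closure $Q_f(r_1,\dots,r_k)$; this gives $T_f\subseteq S_f$ and $Q_f\subseteq S_f$ simultaneously. So fix a maximal lattice-free split $B$. Since $\psi_B$ and the whole construction are invariant under unimodular transformations of $\mathbb{Z}^2$, I may assume $B=\{x\in\mathbb{R}^2:0\le x_1\le 1\}$ and $f=(f_1,f_2)$ with $0<f_1<1$; a direct computation then gives $\psi_B(r)=\max\{\,r_1/(1-f_1),\,-r_1/f_1\,\}$ for $r=(r_1,r_2)$, a function of $r_1$ alone.

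First I would build, for each large integer $n$, a maximal lattice-free triangle $T_n$ that is long and thin along the direction $(0,1)$: let $T_n$ have one edge on the line $x_1=0$ running from $(0,A_n)$ to $(0,B_n)$, where $A_n=\lfloor f_2\rfloor-n$ and $B_n=\lceil f_2\rceil+n$, and let its two remaining edges lie on the lines through $(0,A_n),(1,j)$ and through $(0,B_n),(1,j+1)$ respectively, with $j=\lfloor f_2\rfloor$. One computes that these two lines meet at a point with first coordinate $(B_n-A_n)/(B_n-A_n-1)$, which for $n$ large lies strictly between $1$ and $2$; hence $T_n$ is a genuine triangle with $(1,j)$ and $(1,j+1)$ in the relative interiors of its two slanted edges, at least two integer points in the relative interior of the edge on $x_1=0$, $f$ in its interior for $n$ large, and no integer point in its interior (the only integer column it meets besides the boundary column $x_1=0$ is $x_1=1$, where it spans exactly $[j,j+1]$). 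Thus each $T_n$ is a maximal lattice-free triangle, so $\sum_j\psi_{T_n}(r_j)s_j\ge 1$ is one of the inequalities defining $T_f(r_1,\dots,r_k)$.

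Next I would verify that $\psi_{T_n}(r)\to\psi_B(r)$ for each fixed $r=(r_1,r_2)$. If $r_1<0$, then for $n$ large the ray $f+\lambda r$ leaves $T_n$ through the edge on $x_1=0$, at exactly $\lambda=-f_1/r_1$, so $\psi_{T_n}(r)=-r_1/f_1=\psi_B(r)$. If $r_1=0$, the ray is vertical and leaves $T_n$ through a slanted edge at height tending to $\pm\infty$, so $\psi_{T_n}(r)\to 0=\psi_B(r)$. If $r_1>0$, the ray leaves $T_n$ through one of the two slanted edges (or exactly at the apex); since that edge is anchored at the fixed point $(1,j)$ or $(1,j+1)$ while its other endpoint on $x_1=0$ recedes to $\mp\infty$, and since the apex has first coordinate tending to $1$, the exit point has first coordinate tending to $1$, whence $\lambda\to(1-f_1)/r_1$ and $\psi_{T_n}(r)\to r_1/(1-f_1)=\psi_B(r)$. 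Now if $s\in T_f(r_1,\dots,r_k)$ then $\sum_j\psi_{T_n}(r_j)s_j\ge 1$ for every $n$, and letting $n\to\infty$ in this finite sum gives $\sum_j\psi_B(r_j)s_j\ge 1$; since $B$ was an arbitrary maximal lattice-free split, $s\in S_f(r_1,\dots,r_k)$, so $T_f\subseteq S_f$. For $Q_f\subseteq S_f$ the same three steps apply, with $T_n$ replaced by an analogous sequence of maximal lattice-free quadrilaterals elongated along $(0,1)$ satisfying $\psi_{Q_n}(r)\to\psi_B(r)$.

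The hard part will be the explicit construction of these maximal lattice-free families together with the verification that they really are maximal: one has to keep an integer point --- two of them, on the edge of $T_n$ lying on $x_1=0$ --- in the relative interior of every edge while stretching the set off to infinity, and simultaneously keep it lattice-free and keep $f$ interior. The quadrilateral case is fussier than the triangle case precisely because the requirement of exactly one integer point in the relative interior of each of the four edges is so rigid, so one must be deliberate about where the four edges sit relative to the lines $x_1=0$ and $x_1=1$.
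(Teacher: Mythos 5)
Your strategy --- realize the split's gauge function as a pointwise limit of gauge functions of maximal lattice-free triangles (resp.\ quadrilaterals) and pass to the limit in the finitely many inequalities $\sum_j \psi_{T_n}(r_j)s_j\ge 1$ --- is exactly the argument the paper has in mind when it says splits are a ``limiting case'' and defers to Basu \emph{et al.}\ for rigor. Your triangle half is complete and correct: the $T_n$ you build are Type-2 maximal lattice-free triangles (apex at $x_1=(2n+1)/(2n)$, two fixed lattice points on the slanted edges, $2n$ lattice points in the relative interior of the vertical edge, none in the interior), $f$ is interior for large $n$, $\psi_{T_n}\to\psi_B$ pointwise, and pointwise convergence suffices since the sum is finite and $s\ge 0$ is fixed. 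So $T_f\subseteq S_f$ is proved.

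The quadrilateral half, however, has a genuine gap, and it is not a matter of being ``deliberate'' with an analogous construction: the analogue of $T_n$ does not exist. By Theorem 1.2(iii) a maximal lattice-free quadrilateral contains \emph{exactly four} integral points, so you cannot place a long edge along $x_1=0$ that carries the lattice points $(0,k)$ for $k$ in a range of length $2n$, the way the vertical edge of $T_n$ does; yet any lattice-free quadrilateral containing the slab $\{0\le x_1\le 1,\ |x_2-f_2|\le n\}$ must exclude all but four of the roughly $4n$ nearby lattice points on the lines $x_1=0$ and $x_1=1$ using only four edges. The construction that works (and is the one in Basu \emph{et al.}) tilts the two long edges by a small angle $\epsilon_n$ off the vertical, say $x_1=\epsilon_n x_2$ through $(0,0)$ and $x_1=1+\epsilon_n(x_2-1)$ through $(1,1)$: each long edge then excludes the lattice points of ``its'' line on one side of its single lattice point, the second nearly-parallel edge excludes those far enough on the other side, and the two short cap edges --- placed through, e.g., $(0,-1)$ and $(1,2)$ --- cut off the remaining $O(1/\epsilon_n)$ offenders while carrying the other two lattice points in their relative interiors. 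One must then re-verify maximality, that $f$ stays interior, and that $\psi_{Q_n}\to\psi_B$ pointwise (the long edges converge to $x_1=0$ and $x_1=1$ only uniformly on compacts, which is enough for fixed $r$). None of this appears in your proposal, so as written you have proved $T_f\subseteq S_f$ but only asserted $Q_f\subseteq S_f$.
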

\begin{theorem}
(Basu \emph{et al.}~\cite{BasuBCM11})
$R_f (r_1,\dots, r_k) = T_f (r_1,\dots, r_k) \cap Q_f (r_1,\dots, r_k)$.
\end{theorem}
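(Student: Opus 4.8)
The plan is to deduce the identity from two facts already available to us: the decomposition $R_f(r_1,\dots,r_k) = S_f(r_1,\dots,r_k) \cap T_f(r_1,\dots,r_k) \cap Q_f(r_1,\dots,r_k)$, which follows from Lov{\'a}sz's classification (Theorem~\ref{Lov_MLFCS}), and the inclusions $T_f(r_1,\dots,r_k) \subseteq S_f(r_1,\dots,r_k)$ and $Q_f(r_1,\dots,r_k) \subseteq S_f(r_1,\dots,r_k)$ of Basu \emph{et al.}~\cite{BasuBCM11}. Granting these, the argument is a two-line set computation; the only genuine content lies inside those inclusions, which I comment on at the end.

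First I would record the easy direction. By definition $R_f(r_1,\dots,r_k)$ is the intersection of the inequalities $\sum_{j=1}^{k}\psi_B(r_j)s_j \geq 1$ over \emph{all} maximal lattice-free convex sets $B$ containing $f$ in their interior. Every maximal lattice-free triangle and every maximal lattice-free quadrilateral is such a set, so the intersections defining $T_f(r_1,\dots,r_k)$ and $Q_f(r_1,\dots,r_k)$ each range over a subfamily of the constraints defining $R_f(r_1,\dots,r_k)$. Hence $R_f(r_1,\dots,r_k) \subseteq T_f(r_1,\dots,r_k)$ and $R_f(r_1,\dots,r_k) \subseteq Q_f(r_1,\dots,r_k)$, giving $R_f(r_1,\dots,r_k) \subseteq T_f(r_1,\dots,r_k) \cap Q_f(r_1,\dots,r_k)$.

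For the reverse inclusion I would combine the two cited facts: since $T_f(r_1,\dots,r_k) \subseteq S_f(r_1,\dots,r_k)$, we have $S_f(r_1,\dots,r_k) \cap T_f(r_1,\dots,r_k) = T_f(r_1,\dots,r_k)$, and therefore $R_f(r_1,\dots,r_k) = S_f \cap T_f \cap Q_f = (S_f \cap T_f) \cap Q_f = T_f(r_1,\dots,r_k) \cap Q_f(r_1,\dots,r_k)$. (Only $T_f \subseteq S_f$ is used; the inclusion $Q_f \subseteq S_f$ is not needed here.) Equivalently, and this is the substance that makes the result a theorem rather than a formality, one shows that the inequality from any maximal lattice-free split is implied by the inequalities from maximal lattice-free triangles: a split $B = \{x : c \leq a x_1 + b x_2 \leq c+1\}$ with $f$ in its interior is a degenerate limit of maximal lattice-free triangles $B_n$ (push one vertex of a thin lattice-free triangle out along the strip and enlarge to maximality), and $\psi_{B_n}(r) \to \psi_B(r)$ for every fixed direction $r$, so any $s$ satisfying all the triangle inequalities satisfies $\sum_{j}\psi_B(r_j)s_j \geq 1$ in the limit. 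The delicate point is controlling this convergence over the finitely many rays $r_1,\dots,r_k$ simultaneously, in particular for rays parallel to the two unbounded directions of $B$, where $\psi_{B_n}(r_j)$ and $\psi_B(r_j)$ both tend to $0$; since that inclusion is established in Basu \emph{et al.}~\cite{BasuBCM11}, the present statement follows immediately from the set computation above.
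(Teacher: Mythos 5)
Your proposal is correct and follows essentially the same route the paper indicates: combining the decomposition $R_f = S_f \cap T_f \cap Q_f$ with the inclusion $T_f \subseteq S_f$ (the splits-as-limits-of-triangles fact cited to Basu \emph{et al.}), the identity reduces to the two-line set computation you give. Your closing remarks correctly locate the real content in the limiting argument for $T_f \subseteq S_f$, which the paper likewise defers to the cited reference.
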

It is important to note that if we ignore maximality and consider all triangles and quadrilaterals, then the set of triangles  is redundant and the RCP can  be defined just in terms of quadrilaterals. This is because each triangle can be approximated arbitrarily closely by a sequence of quadrilaterals. However, a maximal lattice-free Type-3 triangle cannot be approximated (see Figure \ref{Apprx_T_C}) arbitrarily closely by a sequence of maximal lattice-free quadrilaterals. Hence, considering only maximal sets, we need both maximal lattice-free triangles and quadrilaterals. 

\begin{figure}
\begin{center}
\includegraphics[height=8cm]{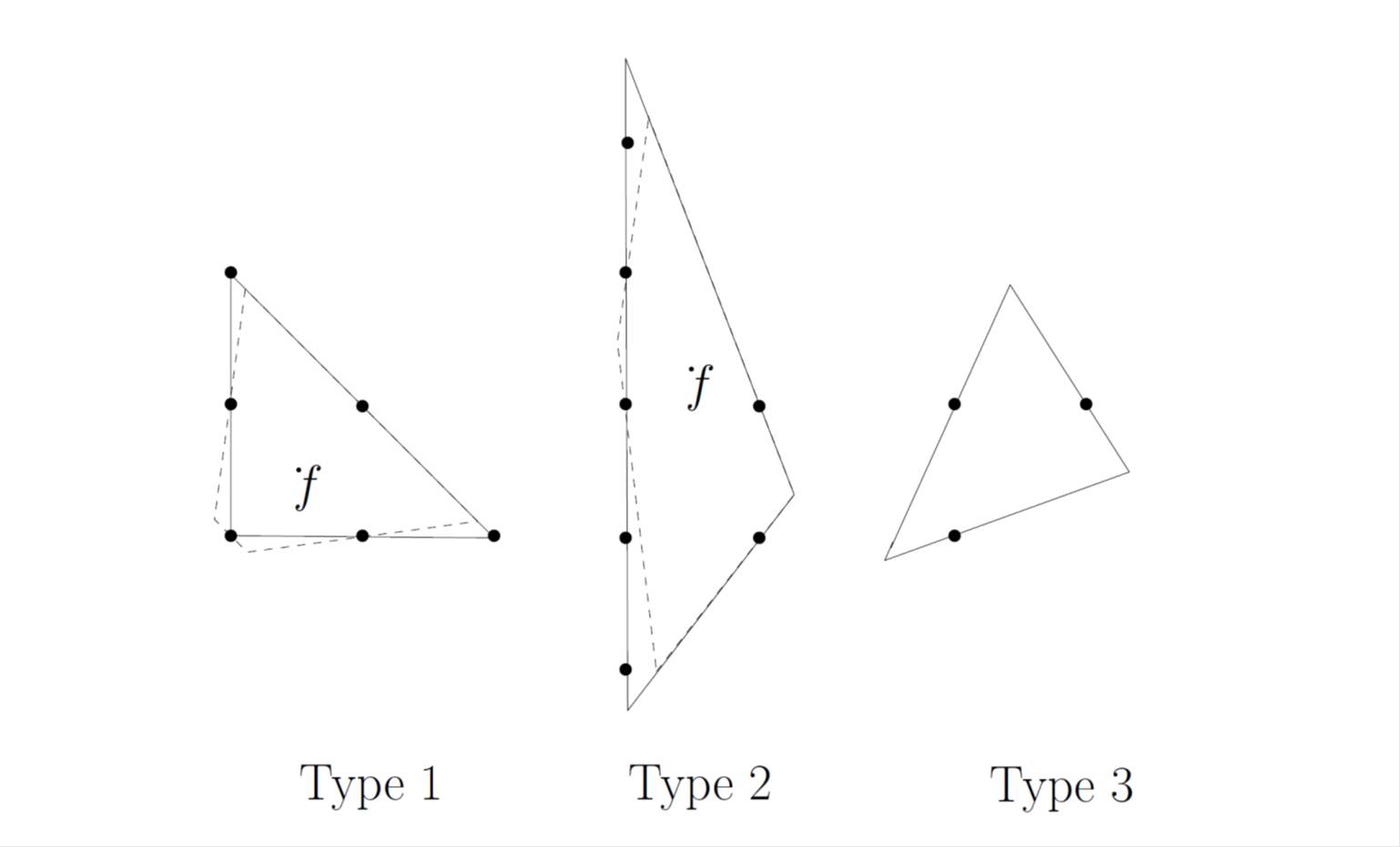}
\caption{Type-3 maximal lattice-free triangles cannot be approximated by maximal lattice-free quadrilaterals (Basu \emph{et al.}~\cite{BasuBCM11}).}
\label{Apprx_T_C}
\end{center}
\end{figure}

\section{Our Contribution}
\paragraph{}
In Section 3, we address the problem of determining how well the quadrilateral closure approximates the RCP. This amounts to finding the smallest possible value $\alpha_Q$ of the parameter $\alpha \geq 1$ such that $ R_f (r_1,\dots, r_k) \subseteq Q_f (r_1,\dots, r_k) \subseteq \alpha R_f (r_1,\dots, r_k)$. To the best of our knowledge, the tightest upper bound - published or unpublished, of 2, appears in Basu \emph{et al.}~\cite{BasuBCM11} i.e. we know that $\alpha_Q \leq 2$. We improve this bound from 2 to 1.71. 

In Section 4, we take up the problem of approximation of the RCP by the triangle closure. This means finding the smallest possible value $\alpha_T$ of the parameter $\alpha \geq 1$ such that $ R_f (r_1,\dots, r_k) \subseteq T_f (r_1,\dots, r_k) \subseteq \alpha R_f (r_1,\dots, r_k)$. To the best of our knowledge, the tightest lower bound - published or unpublished, of 1.125, appears in Cornu\'{e}jols \emph{et al.}~\cite{LB10} i.e. we know that $\alpha_T \geq 1.125$. In Section 4, we generalize this lower bound of 1.125 to an infinite subclass of quadrilaterals.

\section{Upper bound for  approximation of the RCP by the quadrilateral closure}
\paragraph{}
In this section, we address the problem of finding the tightest possible upper bound $\alpha_Q$ for approximation of the RCP by the quadrilateral closure.

\subsection{Existing state-of-the-art bound of 2}
\paragraph{}
The current tightest upper bound, which is from Basu \emph{et al.}~\cite{BasuBCM11} is 2, meaning $\alpha_Q \leq 2$.  We summarize here the theoretical tools which were used and which we will borrow from this work as well. We also introduce some new notation, which is more compact, for our analysis. An elegant tool used in the analysis is Goemans theorem.
\begin{theorem}
(Goemans~\cite{Goemans95}): Let $A \subseteq \mathbb{R}^n_+ \setminus \{0\}$ be a polyhedron of the form $ A = \{x: a^ix \geq b_i \hspace{4pt} \forall i = 1,\dots ,m\}$ where $ a^i \geq 0$ and
$b^i \geq 0 \hspace{4pt} \forall i = 1,\dots, m$. Let $\alpha > 0$ be a scalar. Define $\alpha A = \{x: \alpha a^ix \geq b^i \hspace{4pt} \forall
i = 1,\dots,m \}.$ If the convex set $B$ is a relaxation of $A$, then the smallest $\alpha \geq 1$ such that $B \subseteq \alpha A$ is given by
\begin{equation}
 \max_{i = 1,\dots, m} \{ { {   \frac { b_i}  { \inf \{\large a^i x: x \in B \} }} : \large b_i > 0 \}}. \nonumber
\end{equation}
\label{goemans}
\end{theorem}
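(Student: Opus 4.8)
The plan is to set $\beta_i := \inf\{a^i x : x \in B\}$ and $\alpha^{\star} := \max\{\, b_i/\beta_i : b_i > 0\,\}$, and then establish the two halves of the claim separately: (i) $B \subseteq \alpha^{\star} A$, so the optimal scaling is at most $\alpha^{\star}$; and (ii) every $\alpha$ with $B \subseteq \alpha A$ satisfies $\alpha \ge \alpha^{\star}$, so the optimal scaling is at least $\alpha^{\star}$. Convexity of $B$ is used only lightly; the real structure being exploited is that $A \subseteq B$ (relaxation) together with $a^i \ge 0$, $b_i \ge 0$ and $B \subseteq \mathbb{R}^n_+$, which makes the constraints with $b_i = 0$ — in particular the non-negativity constraints, written as $e_j^\top x \ge 0$ — automatically satisfied on $B$ and insensitive to scaling, so that $\alpha A$ is a relaxation of $A$ for $\alpha \ge 1$ and a genuine contraction for $\alpha < 1$; this is what makes $\alpha \ge 1$ the right regime to optimize over.

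For (i), I would fix $x \in B$ and check $x \in \alpha^{\star} A$ constraint by constraint: when $b_i = 0$ the scaled inequality $\alpha^{\star} a^i x \ge 0$ is immediate from $\alpha^{\star} > 0$, $a^i \ge 0$, $x \ge 0$; when $b_i > 0$ I would use $a^i x \ge \beta_i$ and multiply the defining inequality $\alpha^{\star} \ge b_i/\beta_i$ through by $\beta_i > 0$ to get $\alpha^{\star} a^i x \ge \alpha^{\star}\beta_i \ge b_i$. To see $\alpha^{\star} \ge 1$ (so that the ``smallest $\alpha \ge 1$'' is not clamped), I would note that $A \subseteq B$ forces $\beta_i \le \inf\{a^i x : x \in A\}$, and for an irredundant description of $A$ each constraint is tight on $A$, giving $\beta_i \le b_i$; absent that hypothesis one simply reads the formula as $\max\{1,\alpha^{\star}\}$. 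For (ii), I would take an index $i^{\star}$ attaining the maximum and exploit the infimum directly: for each $\varepsilon > 0$ pick $x_\varepsilon \in B$ with $a^{i^{\star}} x_\varepsilon < \beta_{i^{\star}} + \varepsilon$; since $B \subseteq \alpha A$ gives $\alpha\, a^{i^{\star}} x_\varepsilon \ge b_{i^{\star}}$, we obtain $\alpha > b_{i^{\star}}/(\beta_{i^{\star}} + \varepsilon)$, and letting $\varepsilon \downarrow 0$ yields $\alpha \ge b_{i^{\star}}/\beta_{i^{\star}} = \alpha^{\star}$. Combining (i) and (ii), the minimum admissible $\alpha$ equals $\alpha^{\star}$.

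I do not expect a serious obstacle: each of the two inequalities is a one-line estimate. The only care needed is in the degenerate regimes the clean statement silently excludes — namely $\beta_i \le 0$ for some $i$ with $b_i > 0$, in which case no finite scaling exists and the formula should be read as $+\infty$, and $B$ unbounded so that the infima need not be attained, which is exactly why direction (ii) should be run through an $\varepsilon$-approximate minimizer rather than an exact one. If one prefers to assume each $\inf\{a^i x : x \in B\}$ is attained (e.g.\ $B$ a polyhedron with $\beta_i>0$), both directions shorten to a direct substitution of the minimizer.
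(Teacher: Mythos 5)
The paper does not prove this theorem at all; it is quoted from Goemans's 1995 work as a black box, so there is no in-paper argument to compare yours against. Your two-sided argument --- verifying $B \subseteq \alpha^{\star}A$ constraint by constraint (using $B \subseteq \mathbb{R}^n_+$ and $a^i \ge 0$ for the $b_i = 0$ constraints, and $\alpha^{\star}\beta_i \ge b_i$ for the rest), and extracting $\alpha \ge b_{i^{\star}}/\beta_{i^{\star}}$ from an $\varepsilon$-approximate minimizer of $a^{i^{\star}}x$ over $B$ --- is the standard proof and is correct, including the side conditions you rightly flag: $\beta_i > 0$ for finiteness, and tightness of each constraint on $A$ (via $A \subseteq B$) to guarantee $\alpha^{\star} \ge 1$.
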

The above theorem implies that we need to optimize along the direction of each facet-defining inequality of the polyhedron to be approximated while remaining within the convex set used for approximation and take the best amongst such optimals.
To determine how well the quadrilateral closure approximates the RCP, we need to obtain 
\begin{equation}
\inf \limits_{\text{(}f \text{, MLF set }B \text{): } \psi_B^f  \text{ defines a facet of } R_f}  \inf \limits_{\large s} \{ \sum_{i = 1}^k \psi_B^f (r_i) s_i : s \in Q_f(r_1,\dots,r_k) \}.
\end{equation}
 If a facet defining inequality arises from a maximal lattice-free quadrilateral, then its contribution to the maximization term in Theorem \ref{goemans}  cannot exceed 1. Hence, only facet-defining inequalities from maximal lattice-free triangles need to be considered. i.e.

\begin{align}
\inf \limits_{\text{(}f \text{, MLF set }B \text{): } \psi_B^f  \text{ defines a facet of } R_f}   \text{            }
\inf \limits_{s} \{ \sum_{i = 1}^k \psi_B^f (r_i) s_i : s \in Q_f(r_1,\dots,r_k) \} \nonumber \\
=
\inf \limits_{\text{(}f \text{, MLF triangle }T \text{): } \psi_T^f  \text{ defines a facet of } R_f}  
\text{            } \inf  \limits_{s} \{ \sum_{i = 1}^k \psi _T^f(r_i) s_i : s \in Q_f(r_1,\dots,r_k) \}.
\end{align}

However, we know that (see Figure \ref{Apprx_T_C})  Type-2 triangles can be approximated arbitrarily closely by maximal lattice-free quadrilaterals (Basu \emph{et al.}~\cite{BasuBCM11}). Also, Type-1 triangles can be approximated arbitrarily closely by Type-2 triangles and thus by maximal lattice-free quadrilaterals. Hence, we need only consider Type-3 triangles i.e.
\begin{align}
\inf \limits_{\text{(}f \text{, MLF triangle }T \text{): } \psi_T^f  \text{ defines a facet of } R_f}  
\text{            } \inf  \limits_{s} \{ \sum_{i = 1}^k \psi _T^f(r_i) s_i : s \in Q_f(r_1,\dots,r_k) \}
\\ =
 \inf \limits_{\text{(}f \text{, Type-3 MLF triangle }T \text{): } \psi_T^f  \text{ defines a facet of } R_f} \inf \limits_{s} \{ \sum_{i = 1}^k \psi_T^f (r_i) s_i : s \in Q_f(r_1,\dots,r_k) \}.
\end{align}

Basu \emph{et al.}~\cite{BasuBCM11} use the positive homogeneity of $\psi$ and that $\psi$ is unity for all points on the boundary of T (with shift of origin to $f$). When $\psi_B(r_j)  > 0 $, we define $r_j$ such that that $f+r_j$ is on the boundary of T. This can  be done with the requisite scaling, as it does not  change the optimal value of $ \inf \limits_{s} \{ \sum_{i = 1}^k \psi_T^f (r_i) s_i : s \in Q_f(r_1,\dots,r_k) \}$. Thus from now on, we analyze the LP  $ \inf \limits_{s} \{ \sum_{i = 1}^k s_i : s \in Q_f(r_1,\dots,r_k) \}$.

The following theorem indicates that we only need to consider the corner rays in our analysis.
\begin{theorem}
(Basu \emph{et al.}~\cite{BasuBCM11}): Let $B_1,\dots,B_m$ be lattice-free convex sets with $f$ in the interior of $B_p$, $ p = 1,\dots,m.$ Let $R_c 
\subseteq \{1,\dots,k\}$ be a subset of the ray indices such that for every ray $r_j$ with $j \notin R_c, r^j$ is the convex combination of some two rays in $R_c$. Then
 \center
$ 
(\min$ $  \sum_{i=1}^k s_i \hspace{4pt}: \hspace{4pt} \sum_{i = 1}^k \psi_{B_p} (r_i) s_i \geq 1 \hspace{4pt} 
\forall p = 1,\dots m \hspace{4pt}, s \geq 0)$  
\center
$= (\min$ $  \sum_{i \in R_c} s_i \hspace{4pt}: \hspace{4pt} \sum_{i \in R_c} \psi_{B_p} (r_i) s_i \geq 1 \hspace{4pt} 
\forall p = 1,\dots m \hspace{4pt}, s \geq 0)$.
\end{theorem}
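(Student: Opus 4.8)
The plan is to show the two minima are equal by proving each is $\le$ the other, using only one structural property of the functions $\psi_{B_p}$: since $f$ lies in the interior of each $B_p$, the minimal valid function $\psi_{B_p}$ is the gauge of the convex set $B_p - f$ (which contains the origin as an interior point), hence it is sublinear — positively homogeneous and subadditive — and in particular convex. Concretely, whenever $r = \lambda r_a + (1-\lambda) r_b$ with $\lambda \in [0,1]$, subadditivity followed by positive homogeneity gives $\psi_{B_p}(r) \le \lambda \psi_{B_p}(r_a) + (1-\lambda)\psi_{B_p}(r_b)$ for every $p$. This inequality is the only ingredient beyond bookkeeping.

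For the direction ``full LP $\le$ restricted LP'': any feasible $s$ for the restricted program (indexed by $R_c$) extends to a feasible $s$ for the full program by setting $s_j = 0$ for all $j \notin R_c$, and this extension leaves the objective $\sum_i s_i$ unchanged and preserves each constraint $\sum_{i} \psi_{B_p}(r_i) s_i \ge 1$ verbatim. Hence the full feasible region contains an isometric copy of the restricted one and its minimum is no larger. For the reverse direction, take any feasible $s^\ast$ for the full program; for each $j \notin R_c$ fix one representation $r_j = \lambda_j r_{a(j)} + (1-\lambda_j) r_{b(j)}$ with $a(j), b(j) \in R_c$ and $\lambda_j \in [0,1]$, and define $\bar s$ supported on $R_c$ by transporting the mass $s^\ast_j$ of each excluded ray onto its two representatives in proportions $\lambda_j$ and $1-\lambda_j$, i.e. $\bar s_i = s^\ast_i + \sum_{j \notin R_c,\, a(j)=i} \lambda_j s^\ast_j + \sum_{j \notin R_c,\, b(j)=i} (1-\lambda_j) s^\ast_j$ for $i \in R_c$. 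Then $\sum_{i \in R_c} \bar s_i = \sum_{i=1}^k s^\ast_i$, and for each $p$, using $s^\ast_j \ge 0$ and convexity of $\psi_{B_p}$ term by term, $\sum_{i \in R_c} \psi_{B_p}(r_i) \bar s_i \ge \sum_{i=1}^k \psi_{B_p}(r_i) s^\ast_i \ge 1$; so $\bar s$ is feasible for the restricted program with the same objective value, giving restricted LP $\le$ full LP.

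The same two maps between feasible points show the two programs are simultaneously feasible or infeasible, so the equality also holds under the convention $\min \emptyset = +\infty$. I do not expect a genuine obstacle here: the crux is simply recognizing that $\psi_{B_p}$ is convex (equivalently sublinear), which is precisely what makes it compatible with convex combinations of rays; the rest is the mass-transport bookkeeping above. The only mild subtlety is that an excluded ray may be a convex combination of corner rays in more than one way — but fixing any single such representation per ray suffices for the construction.
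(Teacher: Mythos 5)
The paper only cites this result from Basu \emph{et al.} and gives no proof of its own, so there is nothing internal to compare against; your argument is correct and is the standard one for this lemma. Both directions check out: zero-extension shows the full LP's value is at most the restricted LP's, and the mass-transport step is valid because $\psi_{B_p}$, being the gauge of the convex set $B_p - f$ with $0$ in its interior, is positively homogeneous and subadditive, hence satisfies $\psi_{B_p}(\lambda r_a + (1-\lambda) r_b) \le \lambda\psi_{B_p}(r_a) + (1-\lambda)\psi_{B_p}(r_b)$ exactly as you use it.
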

Thus  if $r_1, \dots, r_3$ are the corner rays of a maximal lattice-free triangle $T$, then 
\begin{equation}
\inf \limits_{s} \{ \sum_{i = 1}^k s_i : s \in Q_f(r_1,\dots,r_k) \} = 
\inf \limits_{s} \{ \sum_{i = 1}^3 s_i : s \in Q_f(r_1,\dots,r_k) \}.
\end{equation} 
 
To prove $\alpha_Q \leq 2$, Basu \emph{et al.}~\cite{BasuBCM11} consider a relaxation, say $Rel_1(Q_f(r_1,\dots,r_k))$ of $Q_f(r_1,\dots,r_k)$, using just two Type-2 triangles - namely $T_1$ and $T_2$ (see Figure \ref{Basu_QC_UB}).   An affine transformation enables us to consider a tilted co-ordinate system, as shown in Figure \ref{Basu_QC_UB}. The Type-3 triangle (say $T$) is defined by the vertices $f+r_1$, $f+r_2$ and $f+r_3$. $T_1$ is given by the side of the Type-3 triangle passing through (0,0), the side of the Type-3 triangle passing through (0,1) and the line $x_1 = 1$. $T_2$ is given by the side of the Type-3 triangle passing through (0,0), the side of the Type-3 triangle passing through (1,0) and the line $x_2 = 1$.  Basu \emph{et al.}~\cite{BasuBCM11} prove the following theorem for the relaxation. 
\begin{theorem}
Let $T$ be a triangle of Type-3 with corresponding minimal function $\psi$ and generating a facet $\sum_{i = 1}^k \psi _T(r_i) s_i \geq 1$ of $R_f(r_1,\dots,r_k)$. Then
$\inf \{ \sum_{i = 1}^k \psi_T (r_i) s_i : s \in Rel_1(Q_f(r_1,\dots,r_k) ) \} \geq \frac{1}{2}.$ 
\end{theorem}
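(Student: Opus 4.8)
The plan is to turn this infimum into an explicit linear program in three variables and bound it by LP duality. First I would apply the stated affine transformation so that the three lattice points in the relative interiors of the edges of $T$ are $(0,0),(1,0),(0,1)$, and label the vertices of $T$ as $A=f+r_1$, $B=f+r_2$, $C=f+r_3$ with edge $AB$ through $(0,0)$, edge $AC$ through $(0,1)$, and edge $BC$ through $(1,0)$. Then $T_1$ and $T_2$ share the edge of $T$ through $(0,0)$, the vertex $A$ is a vertex of $T_1$, the vertex $B$ is a vertex of $T_2$, and $f$ lies in the interior of each of $T_1,T_2$ (which is what makes the two inequalities valid for $Q_f$). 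Next, rescaling every ray $r_j$ by positive homogeneity so that $\psi_T(r_j)=1$ makes the objective equal to $\sum_j s_j$ and puts $f+r_j$ on $\partial T$; since $T$ is bounded each such $f+r_j$ lies on an edge of $T$, hence $r_j$ is a convex combination of two of $r_1,r_2,r_3$, so the corner--ray theorem quoted just above lets me discard every ray except $r_1,r_2,r_3$. What remains is to prove
\[
\min\Bigl\{\, s_1+s_2+s_3 \;:\; s\ge 0,\ \textstyle\sum_{i=1}^3 \psi_{T_1}(r_i)s_i\ge 1,\ \sum_{i=1}^3 \psi_{T_2}(r_i)s_i\ge 1 \,\Bigr\}\ \ge\ \tfrac12 .
\]

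To evaluate this LP I need the six coefficients $\psi_{T_1}(r_i),\psi_{T_2}(r_i)$. Because $A$ is a vertex of $T_1$ and $B$ a vertex of $T_2$, the ray from $f$ towards $A$ leaves $T_1$ exactly at $A$ and the ray towards $B$ leaves $T_2$ exactly at $B$, so $\psi_{T_1}(r_1)=1$ and $\psi_{T_2}(r_2)=1$. For each remaining coefficient one has $\psi_{T_1}(r_i)=1/\lambda$, where $f+\lambda r_i$ is the point at which the ray from $f$ towards the vertex $f+r_i$ leaves $T_1$; such a ray exits $T_1$ either across one of the two edges $T_1$ shares with $T$ or across the new edge on $\{x_1=1\}$, and which one occurs is decided by the position of the relevant vertex of $T$ relative to the line $x_1=1$ (symmetrically for $T_2$ and $x_2=1$). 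I would therefore run a short case split on the positions of $B$ and $C$ relative to $\{x_1=1\}$ and $\{x_2=1\}$, obtaining in each case closed-form expressions for the coefficients in terms of the coordinates of $A,B,C$ and $f$.

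Finally, by LP duality --- equivalently Theorem~\ref{goemans} applied to the relaxation defined by exactly these two inequalities --- the value of the LP equals $\max\{\mu+\nu : \mu,\nu\ge 0,\ \mu\,\psi_{T_1}(r_i)+\nu\,\psi_{T_2}(r_i)\le 1\text{ for }i=1,2,3\}$, so it suffices to produce one dual-feasible pair with $\mu+\nu\ge\tfrac12$. The natural candidate is $\mu=\nu=\tfrac14$, which is feasible precisely when
\[
\psi_{T_1}(r_i)+\psi_{T_2}(r_i)\ \le\ 4\qquad(i=1,2,3);
\]
writing $\lambda_i^{(1)},\lambda_i^{(2)}$ for the two exit parameters, this says $1/\lambda_i^{(1)}+1/\lambda_i^{(2)}\le 4$, i.e.\ for each vertex $v_i$ of $T$ the midpoint $\tfrac12(f+v_i)$ is not outside both $T_1$ and $T_2$, with a quantitative strengthening in the case where it is outside exactly one of them.

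I expect this last step to be the main obstacle. It is the only place that must use quantitatively that $f$ is interior to a \emph{Type-3} triangle and that the three edges of $T$ are pinned through $(0,0),(1,0),(0,1)$; these facts bound how far from the shared edge --- and from the lines $x_1=1$, $x_2=1$ --- the point $f$, and hence each midpoint $\tfrac12(f+v_i)$, can lie. If the clean uniform bound $\psi_{T_1}(r_i)+\psi_{T_2}(r_i)\le 4$ should fail in one case of the split, the fallback is to list the few vertices of the two-constraint LP directly --- for a single active constraint the value is $1/\max_i\psi_{T_j}(r_i)$, and for both active with support $\{i,j\}$ it is an explicit ratio in the $\psi$-coefficients --- and to bound each such value below by $\tfrac12$ using the same geometric relations among the coefficients.
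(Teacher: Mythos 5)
The paper does not actually prove this theorem --- it is quoted from Basu \emph{et al.}~\cite{BasuBCM11} and only the conclusion is recorded --- so your attempt can only be judged on its own merits. Your skeleton (reduce to the three corner rays, rescale so $\psi_T(r_i)=1$, write the two-constraint three-variable LP, bound it by a dual-feasible point) is the right machinery. In the standard parametrization each of the two constraints has two coefficients equal to $1$ and one coefficient exceeding $1$, say $1+a=\psi_{T_1}(r_1)=\frac{\alpha-f_1}{1-f_1}$ and $1+b=\psi_{T_2}(r_2)=\frac{\delta-f_2}{1-f_2}$, so the LP value is $\min\{1,(1+\frac{ab}{a+b})^{-1}\}$ and the whole theorem is equivalent to the single inequality $\frac1a+\frac1b\ge1$, i.e.\ $\frac{1-f_1}{\alpha-1}+\frac{1-f_2}{\delta-1}\ge1$.

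The gap is that your primary certificate $\mu=\nu=\tfrac14$ is infeasible in general: the inequality $\psi_{T_1}(r_i)+\psi_{T_2}(r_i)\le4$ on which it rests is false, because a Type-3 triangle can be arbitrarily elongated toward one vertex. In the family of Theorem~\ref{Bsau_refl} the vertex on the edges through $(0,0)$ and $(1,0)$ has first coordinate $t_3/(t_3-t_2)$, which blows up as $t_2\uparrow1$, $t_3\downarrow1$; already $t_2=0.9$, $t_3=1.1$ puts that vertex at $x_1=5.5$, so with $f=(0.4,0.4)$ one gets $\psi_{T_1}(r_1)=\frac{5.5-0.4}{0.6}=8.5$ while $\psi_{T_2}(r_1)=1$, and your dual constraint reads $9.5/4\le1$. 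The true dual optimum is necessarily very asymmetric, putting almost all weight on the constraint whose large coefficient is small, and the theorem survives only because of the trade-off $\frac{1-f_1}{\alpha-1}+\frac{1-f_2}{\delta-1}\ge1$: a Type-3 triangle cannot be elongated toward two vertices at once, quantitatively, and $f$ must be suitably placed. Your fallback (``list the LP vertices and bound each below by $\tfrac12$ using the geometric relations'') is exactly where the proof lives, but you neither state this inequality nor indicate how the pinning of the three edges through $(0,0),(1,0),(0,1)$ yields it; in Basu \emph{et al.} this is a genuinely nontrivial optimization. A second omission: the two inequalities are valid for $Q_f$ only when $f$ is interior to both $T_1$ and $T_2$, which is why the known proof first normalizes the position of $f$ (the case split $f_1\le\tfrac12,\,f_2\le\tfrac12$ versus $f_1<0,\,f_1+f_2\le\tfrac12$, using rotational symmetry); your argument never restricts $f$, and without some such restriction the setup itself is not well posed.
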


\begin{figure}
\begin{center}
\includegraphics[height=15cm]{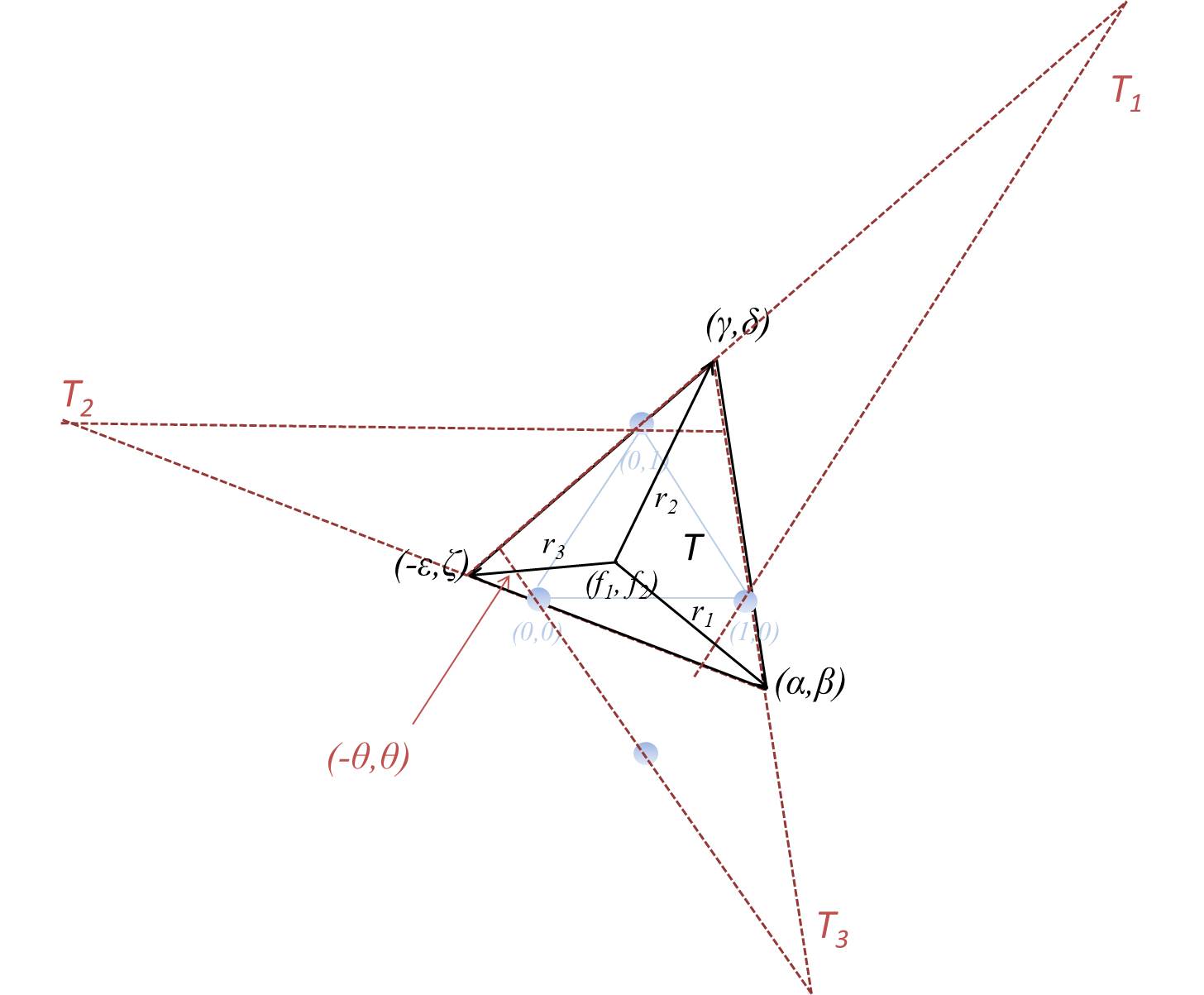}
\caption{Using three instead of two triangles for the relaxation of the quadrilateral closure gives us an improved bound.}
\label{Basu_QC_UB}
\end{center}
\end{figure}

An important result  used in the analysis is the following.
\begin{theorem}
(Basu \emph{et al.}~\cite{BasuBCM11})
Let $\mathcal{F}$ be the family of triangles formed by three lines given by\\
\begin{equation}
 Line \text{ 1 }:  -\frac{x_1}{t_1} + x_2 = 1\text{  with } 0<t_1<\infty; \nonumber 
\end{equation}
\begin{equation}
Line  \text{ 2 }:  t_2x_1 + x_2 = 0 \text{  with } 0<t_2<1; \nonumber
\end{equation}
\begin{equation}
Line  \text{ 3 }:  x_1 + \frac{x_2}{t_3} = 1 \text{  with } 1<t_3<\infty.
\end{equation}
Any Type-3 triangle is either a triangle from $\mathcal{F}$ or the reflection of a triangle from $\mathcal{F}$ about the line $x_1 = x_2$.
\label{Bsau_refl}
\end{theorem}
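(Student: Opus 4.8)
The plan is to reduce to the situation where the three boundary lattice points of the Type-3 triangle $T$ are $(0,0)$, $(1,0)$, $(0,1)$, and then to recover the three edge-lines by forcing lattice-freeness against the finitely many nearby lattice points. First I would note that the three lattice points lying in the relative interiors of the edges of $T$ are affinely independent, since a line cannot meet the relative interiors of all three edges of a triangle; so they are the vertices of a lattice triangle $\Delta'$. Since $T$ is lattice-free, $\Delta'$ has no interior lattice point; and a lattice point in the relative interior of an edge of $\Delta'$ would lie in $\mathrm{int}(T)$ — that edge is a chord of $T$ joining the relative interiors of two distinct edges of $T$, hence its relative interior lies in $\mathrm{int}(T)$ — again contradicting lattice-freeness. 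So $\Delta'$ has exactly three boundary lattice points and no interior lattice point, and Pick's theorem gives $\mathrm{area}(\Delta') = \tfrac12$: $\Delta'$ is unimodular. Applying a unimodular transformation to place its vertices at $(0,0),(1,0),(0,1)$ — and noting that reflection about $x_1 = x_2$ is such a transformation, which is exactly why it appears in the conclusion — I may assume $T \supseteq \Delta := \mathrm{conv}\{(0,0),(1,0),(0,1)\}$ with $(0,0)$, $(1,0)$, $(0,1)$ in the relative interiors of the edges $e_0$, $e_1$, $e_2$ of $T$ respectively, and no other lattice points in $T$.

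Next I would parametrize. No edge of $T$ may contain two of its three boundary lattice points, so $e_0$ is not a coordinate axis, and with $\mathrm{int}(T)$ on the side of $(1,0)$ and $(0,1)$ we may write $e_0 : t_2 x_1 + x_2 = 0$ with $t_2 > 0$; similarly $e_1 : x_1 + b x_2 = 1$ through $(1,0)$ and $e_2 : c x_1 + x_2 = 1$ through $(0,1)$, where $b < 1$ and $c < 1$ because $(0,1)$ and $(1,0)$, being in $T$ but off $e_1$ and $e_2$ respectively, lie strictly on the interior sides. Now $(1,1)$ lies strictly beyond the line $x_1 + x_2 = 1$, on the same side as the vertex $e_1 \cap e_2$, and lies on the interior side of $e_0$; since $(1,1) \notin T$, it is strictly separated from $T$ by $e_1$ or by $e_2$. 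It cannot be separated by both: that would force $b > 0$ and $c > 0$, and then none of $e_0, e_1, e_2$ separates $(-1,1)$ from $T$, contradicting lattice-freeness. Reflecting about $x_1 = x_2$ if necessary (this swaps $e_1$ and $e_2$ and fixes $(0,0)$ and $(1,1)$), assume $e_1$ separates $(1,1)$: then $b > 0$, so $b \in (0,1)$ and $e_1 : x_1 + x_2/t_3 = 1$ with $t_3 := 1/b \in (1,\infty)$ — Line 3. Since $e_2$ then does not separate $(1,1)$ and $(1,1) \notin e_2$, we get $c < 0$, so $e_2 : -x_1/t_1 + x_2 = 1$ with $t_1 := -1/c \in (0,\infty)$ — Line 1. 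Finally $(1,-1) \notin T$ is separated from $T$ by neither $e_1$ (there $1-b \in (0,1)$) nor $e_2$ (there $c - 1 < 0$), so $e_0$ separates it, forcing $t_2 < 1$; and $t_2 \ne 1$ since otherwise $(1,-1) \in e_0 \subseteq \partial T$, so $t_2 \in (0,1)$ — Line 2. A routine check that each remaining nearby lattice point ($(-1,0)$, $(0,-1)$, $(2,0)$, $(0,2)$, $(2,-1)$, $(-1,2)$, and so on) is separated from $T$ by one of $e_0, e_1, e_2$ shows no further constraint arises; hence $T$ is the triangle of $\mathcal{F}$ with parameters $(t_1,t_2,t_3)$, and undoing the optional reflection gives the claim.

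The main obstacle — and the only place needing genuine care — is the sign bookkeeping above, and in particular the dichotomy that $(1,1)$ is separated from $T$ by exactly one of the two edges meeting at the vertex $e_1 \cap e_2$: that dichotomy is precisely what creates the two chiralities, hence the ``or its reflection about $x_1 = x_2$'' clause. A secondary point is to extract the strict inequalities $0 < t_2 < 1$, $t_3 > 1$, $t_1 > 0$ (not their non-strict forms) from the hypothesis that $T$ has exactly three boundary lattice points, each in an edge's relative interior — each borderline case ($t_2 \in \{0,1\}$, $t_3 = 1$, $t_1 \in \{0,\infty\}$, or a coordinate axis as an edge) would force an extra lattice point onto $\partial T$.
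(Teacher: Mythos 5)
The paper offers no proof of this statement --- it is quoted as a known result of Basu \emph{et al.}~\cite{BasuBCM11} --- so there is nothing in-paper to compare your argument against; I can only assess it on its own terms. Your strategy (use Pick's theorem to show the triangle of boundary lattice points is unimodular, normalize them to $(0,0),(1,0),(0,1)$, then pin down the three edge slopes by testing nearby lattice points) is the standard route, and your reading of the statement as holding up to a unimodular transformation is the intended one: the literal statement is false for, say, an integer translate of a member of $\mathcal{F}$. The reduction to the standard simplex, the parametrization $e_0: t_2x_1+x_2=0$, $e_1: x_1+bx_2=1$, $e_2: cx_1+x_2=1$, and the identification of the chirality dichotomy as the source of the reflection clause are all correct.

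Two of the sign-bookkeeping steps, however, do not establish what they claim (and this is exactly the part you yourself flag as the only place needing care). First, to rule out $b>0$ and $c>0$ simultaneously you exhibit $(-1,1)$ as a point no edge separates; but $e_0$ does separate $(-1,1)$ whenever $t_2>1$, and at that stage you know only $t_2>0$. You need the companion witness $(1,-1)$, which $e_0$ fails to separate precisely when $t_2\geq 1$; only the pair $\{(-1,1),(1,-1)\}$ covers both ranges of $t_2$ and completes the contradiction. Second, from $c\leq 0$ you infer $c<0$ because ``$(1,1)\notin e_2$''; this is not sufficient, since for $c=0$ the point $(1,1)$ lies on the line $x_2=1$ containing $e_2$ yet can (and does) lie outside the segment $e_2$, being cut off by $e_1$. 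The case $c=0$ must instead be excluded by producing an extra lattice point in $T$: having first derived $t_2<1$ from $(1,-1)$ (that step needs only $b\geq 0$ and $c\leq 0$, so it can be moved earlier), the edge $e_2$ would run along $x_2=1$ from $(1-b,1)$ to $(-1/t_2,1)$ with $-1/t_2<-1$, forcing $(-1,1)$ onto $\partial T$. Both repairs use the point $(-1,1)$ or $(1,-1)$ rather than the witnesses you cite; with them supplied the proof goes through, and the concluding ``routine check'' of further lattice points is in fact unnecessary for the direction being proved.
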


Thus, by reflexive symmetry, it suffices to consider the family of triangles $\mathcal{F}$.\\

Using rotational symmetry, Basu \emph{et al.}~\cite{BasuBCM11} break the analysis into two cases:\\
Case (i): $f_1\leq \frac{1}{2}$, $f_2 \leq \frac{1}{2}$. \\
Case (ii): $f_1<0$, $f_1 + f_2 \leq \frac{1}{2}$. \\
 For any Type-3 triangle $T$, they show that $\inf \{ \sum_{i = 1}^k \psi_T (r_i) s_i : s \in Rel_1(Q_f(r_1,\dots,r_k) ) \} \geq \frac{1}{2}$ for Case (i) and $\inf \{ \sum_{i = 1}^k \psi_T (r_i) s_i : s \in Rel_1(Q_f(r_1,\dots,r_k) ) \} \geq 0.586$ for Case (ii). This gives an overall minimum of 0.5 and thus a bound of 2. The interested reader may refer to Basu \emph{et al.}~\cite{BasuBCM11} for the details.

\subsection{Our Improved Bound of 1.71}
\paragraph{}
We improve the existing bound using an additional triangle, namely $T_3$ (see Figure \ref{Basu_QC_UB}) instead of just $T_1, T_2$ in the LP relaxation. $T_3$ is given by the side of the Type-3 triangle passing through (0,1), the side of the Type-3 triangle passing through (1,0) and the line $x_1 + x_2 = 0$.  Let the relaxation obtained be  $Rel_2(Q_f(r_1,\dots,r_k))$.
We prove the following theorem which gives us our improved bound.
\begin{theorem}
\begin{equation}
 \inf \limits_{\text{(}f \text{, Type-3 MLF triangle }T \text{): } \psi_T^f  \text{ defines a facet of } R_f} \inf \limits_{s} \{ \sum_{i = 1}^3 \psi_T^f (r_i) s_i : s \in Q_f(r_1,\dots,r_k) \} \geq 
0.586
\end{equation}
 \label{QC_YPA_new}
\end{theorem}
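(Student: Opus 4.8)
The plan is to mirror the structure of Basu \emph{et al.}'s argument for the bound of $2$, but to work with the tighter relaxation $Rel_2(Q_f(r_1,\dots,r_k))$, which adds the third triangle $T_3$ to the two triangles $T_1,T_2$ already used. By the reductions already established in the excerpt (positive homogeneity of $\psi$ lets us rescale so the relevant rays land on $\partial T$; the corner-ray theorem lets us keep only the three corner rays $r_1,r_2,r_3$; and $Rel_2 \supseteq Q_f$), it suffices to show that the linear program
\[
\inf_{s\ge 0}\Bigl\{\, s_1+s_2+s_3 \;:\; \textstyle\sum_{i=1}^3 \psi_{T_p}(r_i)\,s_i \ge 1,\ p=1,2,3 \Bigr\} \;\ge\; 0.586
\]
for every Type-3 triangle $T$ with $f$ in its interior. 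By Theorem~\ref{Bsau_refl}, up to reflection about $x_1=x_2$ we may assume $T\in\mathcal{F}$, so $T$ is parametrized by $(t_1,t_2,t_3)$ in the stated ranges, with $f=(f_1,f_2)$ interior to $T$; and by rotational symmetry we may restrict to the two cases Case (i) $f_1\le \tfrac12,\ f_2\le \tfrac12$ and Case (ii) $f_1<0,\ f_1+f_2\le\tfrac12$, exactly as in the existing proof.

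First I would write down explicit formulae for $\psi_{T_1},\psi_{T_2},\psi_{T_3}$ evaluated at the three corner rays $r_1=(f+r_1)-f$, etc., where $f+r_1,f+r_2,f+r_3$ are the vertices of $T$. Each $\psi_{T_p}$ is the gauge of $T_p$ translated to $f$, so $\psi_{T_p}(r_i)$ is the reciprocal of the scaling that puts $f+r_i$ on $\partial T_p$; since the edges of $T_p$ are among the three named lines ($x_1=1$, $x_2=1$, $x_1+x_2=0$, and the sides of $T$ through $(0,0),(0,1),(1,0)$), these are rational functions of $t_1,t_2,t_3,f_1,f_2$. Next, rather than solving the $3$-constraint LP in closed form, I would use LP duality: exhibit a feasible dual solution $y=(y_1,y_2,y_3)\ge 0$ with $\sum_p y_p\,\psi_{T_p}(r_i)\le 1$ for $i=1,2,3$ and $\sum_p y_p \ge 0.586$ for all admissible parameters. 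The cleanest route is to guess the combinatorial structure of the optimal primal vertex in each parameter region (which two of the three $\psi$-constraints are tight, or which single one together with an $s_i=0$), compute the corresponding primal/dual pair, and then verify feasibility. One expects the worst case to occur on the boundary of the parameter region — in particular where $T_3$'s constraint just becomes active — and there the bound should hit $2-\sqrt2\approx 0.5858$, explaining the constant $0.586$.

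The main obstacle is the case analysis and the final optimization over $(t_1,t_2,t_3,f_1,f_2)$: showing that the proposed dual certificate stays feasible uniformly, and identifying exactly where the infimum $0.586$ is attained. The $2 - \sqrt 2$ constant strongly suggests the binding configuration involves $T_3$ symmetrically placed (the $x_1+x_2=0$ edge playing the role that $x_1=1$ and $x_2=1$ play for $T_1,T_2$), so in Case~(ii) — which was the harder of the two cases in the original proof and already gave exactly $0.586$ with $T_3$ absent from the picture — I would expect $Rel_2$ to change little, while in Case~(i) the addition of $T_3$ must push the old bound of $\tfrac12$ up past $0.586$; verifying this improvement in Case~(i), by showing that whenever all of $T_1,T_2$ leave room for an $s$ with $s_1+s_2+s_3<0.586$, the constraint from $T_3$ is violated, is where the real work lies. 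I would handle it by reducing, via the homogeneity and corner-ray reductions, to a low-dimensional bounded region in the parameters and checking the dual certificate by elementary (if tedious) inequalities, taking limits to confirm that $0.586$ is approached but that a uniform lower bound of $2-\sqrt2$ holds throughout.
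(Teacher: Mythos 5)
Your overall strategy coincides with the paper's: both arguments analyze the three-constraint LP coming from $T_1,T_2,T_3$ by exhibiting a dual feasible point, split according to the position of $f$, and locate the binding value $0.586\approx 2-\sqrt{2}$ in the region that was already worst for Basu \emph{et al.}\ with only $T_1,T_2$. The paper's case split differs slightly from yours: it partitions the Type-3 triangle according to whether $f$ lies in all three Type-2 triangles or in exactly two of them, so that the ``exactly two'' region is (after a rotation) contained in Basu \emph{et al.}'s Case (ii) and inherits the $0.586$ bound verbatim (Lemma \ref{QC_UB_Case2}), while only the central region is handled afresh. This is a somewhat cleaner decomposition than carrying over Basu \emph{et al.}'s Case (i)/(ii) split, since under your Case (i) you would still have to treat both the central region and parts of the corner regions with different active constraint sets.

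The genuine gap is in the part you yourself flag as ``where the real work lies'': you never actually produce the dual certificate or the uniform lower bound for the region where all three triangles are active, and your expectation that $T_3$ ``must'' push the old bound of $\tfrac12$ past $0.586$ there is precisely the claim requiring proof. The paper carries this out concretely (Lemma \ref{QC_UB_L1}): writing the three tight dual equations as $(1+a)w_1+w_2+w_3=1$, etc., it solves them in closed form to get the dual objective $1-\bigl(1+\tfrac1a+\tfrac1b+\tfrac1c\bigr)^{-1}$, identifies $\tfrac1a+\tfrac1b+\tfrac1c$ with the geometric quantity $\tfrac{1-f_1}{\alpha-1}+\tfrac{1-f_2}{\delta-1}+\tfrac{\theta+f_1}{\epsilon-\theta}$ expressed in the edge slopes $u,v,w$ (Lemma \ref{app_proof_1}), and then asserts that this sum is at least $\sqrt{3}$ on the central region (Lemma \ref{FA_QC_UB}), yielding $1-\tfrac{1}{1+\sqrt{3}}\approx 0.633>0.586$ there; the theorem's constant $0.586$ then comes entirely from the inherited Case (ii) bound, consistent with your guess about where the extremal configuration lives. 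Without an analogue of this closed-form dual computation and the $\sqrt{3}$ inequality, your argument does not yet establish the theorem. (It is worth noting that the paper's own verification of the $\sqrt{3}$ inequality is itself only sketched --- it sets up the reduced region $Q$ by rotational symmetry and stops --- so the quantitative heart of the argument is underdeveloped on both sides.)
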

\begin{corollary}
$ Q_f (r_1,\dots, r_k) \subseteq 1.71 R_f (r_1,\dots, r_k)$.
\end{corollary}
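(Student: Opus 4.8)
The plan is to follow the same architecture as the proof of the old bound of $2$ in Basu \emph{et al.}~\cite{BasuBCM11}, but with the strictly larger collection of three relaxing triangles $T_1,T_2,T_3$ defining $Rel_2(Q_f(r_1,\dots,r_k))$, and to track where the extra triangle $T_3$ buys us an improvement. By Theorem~\ref{Bsau_refl} it suffices to take the Type-3 triangle $T$ in the normalized family $\mathcal{F}$ parametrized by $(t_1,t_2,t_3)$ with $0<t_1<\infty$, $0<t_2<1$, $1<t_3<\infty$, so $T$ has vertices at the pairwise intersections of Lines 1, 2, 3; call the corner rays $r_1,r_2,r_3$ with $f+r_i$ the vertices of $T$. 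By the reduction already set up in the excerpt (positive homogeneity of $\psi_T$, the scaling that lets us assume $f+r_i\in\mathrm{Boundary}(T)$, and the corner-ray theorem), the quantity to be bounded below is $\inf_s\{s_1+s_2+s_3: s\in Rel_2(Q_f(r_1,r_2,r_3))\}$, a $3$-variable linear program whose constraints are $\psi_{T_1}(r_1)s_1+\psi_{T_1}(r_2)s_2+\psi_{T_1}(r_3)s_3\ge 1$ and similarly for $T_2,T_3$, with $s\ge 0$. The first task is to write down these nine coefficients $\psi_{T_\ell}(r_i)$ as explicit rational functions of $(t_1,t_2,t_3)$ and of $f=(f_1,f_2)$; since $T_1$ and $T_2$ share two edges with $T$ and $T_3$ shares the other two, the ray $r_i$ that is a vertex of $T$ lies on the boundary of two of the $T_\ell$ at its own vertex, so several of the coefficients equal $1$ and the rest are gauge values computed by intersecting the ray from $f$ with the one ``new'' edge of $T_\ell$ (the lines $x_1=1$, $x_2=1$, $x_1+x_2=0$ respectively).

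Next I would solve the LP. With three constraints and three nonnegative variables, the dual is also a $3$-variable LP, and the optimal value equals $\min$ over the (finitely many) bases. I would take the LP dual $\max\{y_1+y_2+y_3: \sum_\ell \psi_{T_\ell}(r_i)y_\ell\le 1\ \forall i,\ y\ge 0\}$ and exhibit an explicit dual-feasible $y(t_1,t_2,t_3,f)$ together with a primal-feasible $s$ whose objective matches, thereby certifying the optimal value in closed form; alternatively, enumerate the $O(1)$ candidate primal bases and take the smallest. Following Basu \emph{et al.}, I would then split into the two rotational-symmetry cases Case (i) $f_1\le\frac12,\ f_2\le\frac12$ and Case (ii) $f_1<0,\ f_1+f_2\le\frac12$, and in each case reduce the bound $\inf_s\{s_1+s_2+s_3\}\ge 0.586$ to showing that a single explicit function $g(t_1,t_2,t_3,f_1,f_2)$ — the LP value — is at least $2-\sqrt{2}=0.5857\ldots$ on the relevant box. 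In Case (i) the old analysis already gave $\ge \tfrac12$ using only $T_1,T_2$; the point is that adding the $T_3$ constraint can only shrink the feasible region, so the infimum can only increase, and the content is to show it increases all the way to $0.586$ — i.e.\ that the old worst-case configurations (where $T_1,T_2$ alone gave exactly $\tfrac12$) are cut off by $T_3$. In Case (ii) the old bound was already $0.586$, so here it suffices to check that the $T_3$ constraint does not hurt, which is immediate.

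The main obstacle is the last step: verifying $g(t_1,t_2,t_3,f_1,f_2)\ge 2-\sqrt{2}$ over the full parameter region in Case (i). This is a multivariate rational-function minimization, and one must argue that the minimum is attained either in the interior (where $\nabla g=0$, giving an algebraic system that pins down the extremal triangle and $f$) or on the boundary of the box — including the degenerate limits $t_1\to 0$ or $\infty$, $t_2\to 0$ or $1$, $t_3\to 1$ or $\infty$, where $T$ degenerates toward a split or a Type-2 triangle and $g$ should tend to a value $\ge \tfrac12$ and in fact $\ge 0.586$. I expect that, as in the prior work, the worst case occurs at a specific symmetric configuration (some $t_i$ equal, $f$ at a particular rational point), so the $0.586$ is not $2-\sqrt2$ by accident but is the value of $g$ at that stationary point; making this rigorous — ruling out all other stationary points and all boundary faces — is the technical heart and the place where the bulk of the calculation lives. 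Once Theorem~\ref{QC_YPA_new} is in hand, the Corollary follows immediately from Goemans' Theorem~\ref{goemans}: the reductions in the excerpt show that the Goemans maximum defining $\alpha_Q$ equals $1/\inf_s\{\cdots\}\le 1/0.586 < 1.71$, hence $Q_f(r_1,\dots,r_k)\subseteq 1.71\,R_f(r_1,\dots,r_k)$.
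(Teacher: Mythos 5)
Your high-level architecture matches the paper's: Goemans' theorem, the reduction to Type-3 triangles and corner rays, the three-triangle relaxation $Rel_2$, and LP duality on a $3\times 3$ system. But the proof is not actually carried out: the one step that makes the bound $1.71$ rather than $2$ --- showing that the LP value exceeds $0.586$ on the region where Basu \emph{et al.}'s two-triangle analysis only gave $0.5$ --- is explicitly deferred in your write-up (``making this rigorous \dots is the technical heart and the place where the bulk of the calculation lives''). As it stands you have a plan plus the easy half (the region where the old $0.586$ bound already applies and adding $T_3$ cannot hurt), and the hard half is an unproved multivariate minimization. That is a genuine gap, not a stylistic difference.

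It is also worth noting that the paper's case decomposition is different from yours in a way that matters. You propose to reuse Basu \emph{et al.}'s split into Case (i) $f_1\le\frac12,\ f_2\le\frac12$ and Case (ii) $f_1<0,\ f_1+f_2\le\frac12$. The paper instead splits according to how many of the three Type-2 triangles $T_1,T_2,T_3$ contain $f$: when $f$ lies in only two of them, that region is contained in Basu \emph{et al.}'s Case (ii) region, so the $0.586$ bound is inherited with no new work; when $f$ lies in all three, every coefficient $\psi_{T_\ell}(r_\ell)$ exceeds $1$, the dual system $(1+a)w_1+w_2+w_3=1$, etc.\ has the closed-form value $1-\bigl(1+\tfrac1a+\tfrac1b+\tfrac1c\bigr)^{-1}$, and the whole problem reduces (via a slope parametrization $(u,v,w)$ of the Type-3 triangle) to the single inequality $\tfrac1a+\tfrac1b+\tfrac1c\ge\sqrt3$, giving $\approx 0.633$ on that region. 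In particular your guess that the binding constant in the ``new'' region is $2-\sqrt2$ is not how the paper's numbers work out: the $0.586$ in the final theorem comes from the inherited Case (ii) region, and the genuinely new region gives the larger value $1-\tfrac{1}{1+\sqrt3}\approx 0.633$. Your decomposition would additionally force you to handle, within a single case, configurations where $f$ is outside $T_3$ (so one LP coefficient drops below $1$ and the clean dual formula fails), which is precisely the complication the paper's decomposition is designed to avoid.
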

\paragraph{}
\emph{Proof of Theorem \ref{QC_YPA_new}}

We divide the proof into two cases, depending on the position of the fractional solution (see Figure \ref{pic_QC_UB_2}) .\\
Case 1: The region (bounded in solid green in Figure \ref{pic_QC_UB_2})  in the Type-3 triangle common to all the three Type-2 triangles, namely $T_1$, $T_2$ and $T_3$. \\
Case 2:  the region in the Type-3 triangle lying in exactly two of the three Type-2 triangles. \\

Case 2: The following result follows easily.
\begin{lemma}
Let $f$ belong to  the region in a Type-3 triangle lying in exactly two of the three Type-2 triangles used for approximation. Then \\
\begin{equation}
 \inf \limits_{\text{(}f \text{, Type-3 MLF triangle }T \text{): } \psi_T^f  \text{ defines a facet of } R_f} \inf \limits_{s} \{ \sum_{i = 1}^3 \psi_T^f (r_i) s_i : s \in Q_f(r_1,\dots,r_k) \} \geq 0.586. \nonumber
\end{equation}
\label{QC_UB_Case2}
\end{lemma}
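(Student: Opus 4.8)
The plan is to exploit the fact that when $f$ lies in exactly two of the three Type-2 triangles, we are essentially back in the Basu \emph{et al.}~\cite{BasuBCM11} situation, where the relaxation was built from only \emph{two} Type-2 triangles. Concretely, if $f$ lies in, say, $T_1$ and $T_2$ but not in $T_3$, then $\mathrm{Rel}_2(Q_f(r_1,\dots,r_k))$ is contained in the relaxation $\mathrm{Rel}_1(Q_f(r_1,\dots,r_k))$ coming from $T_1$ and $T_2$ alone, so any lower bound proved for $\mathrm{Rel}_1$ immediately applies. First I would observe that the condition ``$f$ is in exactly two of the three Type-2 triangles'' partitions the Type-3 triangle into three sub-regions by symmetry, and that after the affine normalization of Theorem \ref{Bsau_refl} and the rotational reductions, each such sub-region corresponds — up to relabeling of the coordinate axes and the corner rays $r_1, r_2, r_3$ — to one of the two cases Basu \emph{et al.} analyze, namely Case (i) $f_1 \le \tfrac12, f_2 \le \tfrac12$, or Case (ii) $f_1 < 0, f_1 + f_2 \le \tfrac12$.

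The key steps, in order, are: (1) fix which two of $T_1, T_2, T_3$ contain $f$, say $T_a$ and $T_b$, and note $\mathrm{Rel}_2 \subseteq \mathrm{Rel}_1$ where $\mathrm{Rel}_1$ uses $T_a, T_b$; (2) identify which of the two triangles $T_a, T_b$ play the roles of the ``$T_1$'' and ``$T_2$'' in the Basu \emph{et al.} argument, using the explicit line descriptions of Theorem \ref{Bsau_refl} and the fact that the three pairs $\{T_1,T_2\}, \{T_1,T_3\}, \{T_2,T_3\}$ are carried into one another by the symmetries of the configuration (the reflection about $x_1 = x_2$ and the rotational relabeling); (3) invoke the Basu \emph{et al.} bound, which gives $\inf\{\sum_i \psi_T(r_i)s_i : s \in \mathrm{Rel}_1\} \ge 0.586$ in Case (ii) and $\ge \tfrac12$ in Case (i); (4) finally check that the sub-region of the Type-3 triangle where $f$ lies in exactly two of $T_1, T_2, T_3$ always falls (under the appropriate symmetry) into Case (ii) rather than Case (i), so that the stronger constant $0.586$ is the one that applies, not the weaker $\tfrac12$. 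Step (4) is what makes the $0.586$ bound (hence the $1.71$ factor) valid in this case rather than only $2$.

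The main obstacle I anticipate is step (4): verifying that the ``exactly two'' region is disjoint from the interior of the ``Case (i)'' region $\{f_1 \le \tfrac12, f_2 \le \tfrac12\}$ — or more precisely, that wherever the weaker bound $\tfrac12$ would be the only thing available, $f$ is actually in all three of $T_1, T_2, T_3$ (which is then Case 1, handled separately). This requires writing down the half-plane inequalities defining $T_1, T_2, T_3$ in the normalized coordinates (using the line equations from Theorem \ref{Bsau_refl} for the two slanted sides and the lines $x_1 = 1$, $x_2 = 1$, $x_1 + x_2 = 0$ for the third sides), intersecting the complement of $T_3$ with the Type-3 triangle, and checking that this set, modulo the reflection about $x_1 = x_2$ and relabeling, lies in the Case (ii) region. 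This is a finite, elementary computation with linear inequalities, but it is the one place where the geometry of the three-triangle relaxation genuinely enters and must be done carefully; everything else is a reduction to the already-established results of Basu \emph{et al.}~\cite{BasuBCM11}.
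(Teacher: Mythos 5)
Your proposal follows essentially the same route as the paper: the paper's entire proof is the one-line observation that the ``exactly two of three triangles'' region is contained in the region of Case~(ii) of Basu \emph{et al.}~\cite{BasuBCM11} (where the $0.586$ bound is already established for the two-triangle relaxation), which is precisely your steps (1)--(4). If anything, you are more careful than the paper, since you explicitly flag the geometric containment check in step (4) that the paper asserts without verification.
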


\begin{proof}
The region defined by our Case 2 is contained in the region defined by Case (ii) in Basu \emph{et al.}~\cite{BasuBCM11} for which a minimum of 0.586 is known. The result follows.
\end{proof}

Case 1:
We now prove a result for Case 1, which gives us the improved bound.
\begin{lemma}
Let $f$ belong to  the region in a Type-3 triangle lying in all three of the Type-2 triangles used for approximation. Then \\
\begin{equation}
\inf \limits_{\text{(}f \text{, Type-3 MLF triangle }T \text{): } \psi_T^f  \text{ defines a facet of } R_f} \inf \limits_{s} \{ \sum_{i = 1}^3 \psi_T^f (r_i) s_i : s \in Rel_2(Q_f(r_1,\dots,r_k)) \} \geq 
0.633. \nonumber
\end{equation}
\label{QC_UB_L1}
\end{lemma}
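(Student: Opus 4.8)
The plan is to reduce Lemma~\ref{QC_UB_L1} to an explicit, finite-dimensional optimization problem via Goemans' theorem (Theorem~\ref{goemans}), exactly as in the $\alpha_Q \le 2$ argument, and then to exploit the \emph{third} triangle $T_3$ to push the bound from $\tfrac12$ up to $0.633$. First I would fix, by Theorem~\ref{Bsau_refl} and reflexive symmetry, a Type-3 triangle $T \in \mathcal{F}$ with corner rays $r_1 = (f+$vertex on Line~1$) - f$, $r_2, r_3$ analogously, so that after the rescaling described in the text we are computing $\inf\{\sum_{i=1}^3 s_i : s \in Rel_2(Q_f)\}$. Since $Rel_2(Q_f)$ is the intersection of the three half-space systems coming from $\psi_{T_1}, \psi_{T_2}, \psi_{T_3} \ge 1$ on the three corner rays, Goemans' theorem tells us that this infimum equals $\Big(\max_{\ell \in \{1,2,3\}} \tfrac{1}{\,\inf\{\sum_i \psi_{T_\ell}(r_i)\,s_i : s \ge 0,\ \text{the two other triangle constraints hold}\}\,}\Big)^{-1}$-type expression; more directly, by LP duality the value of $\inf\{\sum s_i : s\in Rel_2\}$ is the max over convex combinations $\sum_\ell \mu_\ell \psi_{T_\ell}$ that are dominated coordinatewise by the all-ones vector $(\psi_T(r_1),\psi_T(r_2),\psi_T(r_3)) = (1,1,1)$, of $\sum_\ell \mu_\ell$. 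So the task becomes: show that for every admissible $T$ and every $f$ in the triple-overlap region, there is a convex combination $\mu_1\psi_{T_1}+\mu_2\psi_{T_2}+\mu_3\psi_{T_3}$ with $\mu_1\psi_{T_1}(r_i)+\mu_2\psi_{T_2}(r_i)+\mu_3\psi_{T_3}(r_i) \le 1$ for $i=1,2,3$ and $\mu_1+\mu_2+\mu_3 \ge 0.633$.

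Next I would write down $\psi_{T_1}(r_i), \psi_{T_2}(r_i), \psi_{T_3}(r_i)$ explicitly in terms of the parameters $t_1,t_2,t_3$ of the family $\mathcal{F}$ and the coordinates $(f_1,f_2)$. Because each $T_\ell$ shares \emph{two} of its three edges with $T$, two of the three values $\psi_{T_\ell}(r_i)$ are forced: along a corner ray $r_i$ lying on an edge that $T_\ell$ inherits from $T$, we have $f + r_i \in \partial T_\ell$ iff that point is inside the "cutting line" of $T_\ell$ ($x_1=1$, $x_2=1$, or $x_1+x_2=0$ respectively), so $\psi_{T_\ell}(r_i)$ is either $1$ or the (smaller) reciprocal obtained from the cutting line. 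The only genuinely new gauge values are the ones where $r_i$ points at the new edge of $T_\ell$. Collecting these, $\sum_i \psi_{T_\ell}(r_i) s_i \ge 1$ becomes three affine constraints in $s$ with coefficients rational in $t_1,t_2,t_3,f_1,f_2$, and I would then solve the resulting small LP (it has three variables $s_1,s_2,s_3$ and three constraints, so its value is attained at a basic solution, i.e. at the intersection of two of the facets). This yields a closed-form expression $g(t_1,t_2,t_3,f_1,f_2)$ for the objective, valid on the triple-overlap subcase, and the lemma is the inequality $g \ge 0.633$ over that parameter domain.

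The main obstacle, as usual in these corner-polyhedron approximation arguments, is the final multivariate minimization $\min g \ge 0.633$: the domain is a semialgebraic region in $(t_1,t_2,t_3,f_1,f_2)$ cut out by the constraints "$T \in \mathcal{F}$" ($0<t_1<\infty$, $0<t_2<1$, $1<t_3<\infty$) together with "$f$ lies in $T_1\cap T_2\cap T_3$", and $g$ is a ratio of low-degree polynomials whose combinatorial form may change across walls of the LP (which pair of constraints is tight). I would handle this by first using the rotational symmetry reductions of Basu \emph{et al.} to restrict $f$ to Case~(i) ($f_1 \le \tfrac12, f_2 \le \tfrac12$) — the triple-overlap region sits inside this case — then fixing $f$ and optimizing $g$ over $(t_1,t_2,t_3)$, showing the minimum over the triangle parameters is attained either in the interior (KKT stationarity, reducing to polynomial equations one can solve) or on the boundary of the $\mathcal{F}$-domain, where $T$ degenerates to a split and the bound is trivially $1 \ge 0.633$; finally optimizing the resulting function of $(f_1,f_2)$ over the triple-overlap polygon, again checking vertices and stationary points. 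One expects the worst case to occur at a symmetric configuration (e.g. $f$ near the centroid-type point, $t_2$ near its extreme, $t_1,t_3$ balanced), where $g$ should evaluate to the stated $0.633$; I would verify that candidate numerically and then confirm it analytically. Combining Lemma~\ref{QC_UB_L1} with Lemma~\ref{QC_UB_Case2} covers all positions of $f$ and gives the overall bound $0.586$ in Theorem~\ref{QC_YPA_new}, hence the corollary $Q_f \subseteq 1.71\, R_f$ via Theorem~\ref{goemans}.
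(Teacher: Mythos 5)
Your overall strategy matches the paper's: both set up the three-constraint LP coming from $\psi_{T_1},\psi_{T_2},\psi_{T_3}$ evaluated on the three corner rays of the Type-3 triangle, pass to the LP dual, and reduce the lemma to a parametric optimization over the triangle shape and the position of $f$. Your observation that for each $T_\ell$ two of the three gauge values are forced (the rays along inherited edges give value $1$) and only the ray aimed at the cut-off corner gives a nontrivial coefficient is exactly the structure the paper's LP (constraints of the form $\frac{\alpha-f_1}{1-f_1}s_1+s_2+s_3\ge1$, etc.) exploits.

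Where you diverge, and where the genuine gap lies: the paper does not grind through a generic basic-solution case analysis followed by a numerical search. It writes the three diagonal coefficients as $1+a$, $1+b$, $1+c$, solves the dual system with all three constraints tight in closed form to get $w_1+w_2+w_3 = 1 - \bigl(1+\tfrac1a+\tfrac1b+\tfrac1c\bigr)^{-1}$, and identifies each of $\tfrac1a,\tfrac1b,\tfrac1c$ geometrically (ratio of the distance from $f$ to the relevant cutting line over the distance from that line to the corresponding vertex of $T$, along the corner ray), re-expressed via the slopes $u,v,w$ in Lemma~\ref{app_proof_1}. This converts the lemma into the single clean inequality $\tfrac{1-f_1}{\alpha-1}+\tfrac{1-f_2}{\delta-1}+\tfrac{\theta+f_1}{\epsilon-\theta}\ge\sqrt3$, i.e.\ the bound $1-\tfrac{1}{1+\sqrt3}\approx 0.634$. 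Your plan never arrives at a target of this form: the decisive quantitative step is deferred to ``verify numerically at a symmetric configuration, then confirm analytically,'' which is precisely the content of the lemma and cannot be left as a placeholder. A second concrete error: your claim that the triple-overlap region sits inside Case~(i) ($f_1\le\tfrac12$, $f_2\le\tfrac12$) is false --- a point such as $(0.9,0.05)$ can lie in $T\cap\{x_1\le1\}\cap\{x_2\le1\}\cap\{x_1+x_2\ge0\}$; the correct reduction is the $120^\circ$ rotational symmetry of the three cutting lines, which maps the triple-overlap region onto a fundamental subregion with $x_1\le\tfrac12$. Minor slips: in $\mathbb{R}^3$ a basic solution requires three tight constraints, not two, and indeed the relevant dual solution has all three triangle constraints tight; and the gauge value on the ray aimed at the cut corner is larger than $1$, not a ``smaller reciprocal.''
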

\begin{proof}

The minimization problem $ \inf \limits_{s} \{ \sum_{i = 1}^3 \psi_T^f (r_i) s_i : s \in Rel_2(Q_f(r_1,\dots,r_k)) \}$  is the same as the LP
\begin{align}
\min \hspace{4pt}  \sum_{i = 1}^3 s_i  \hspace{4pt} s.t. \nonumber\\ 
  \frac{\alpha - f_1}{1- f_1} s_1+ s_2+s_3  \geq 1, \nonumber \\ 
 s_1 + \frac{\delta - f_2}{1- f_2} s_2 + s_3  \geq 1 , \\ 
 s_1 + s_2 + \frac{\epsilon+f_1}{\theta + f_1} s_3 \geq 1, \nonumber\\
s \in \mathbb{R}^3_+ \nonumber.
\end{align}

%\begin{align}
%$\min$ $  \sum_{i = 1}^4 s_i$ s.t.\\ 
%$ s_1 + \frac{\alpha - f_1}{1- f_1} s_2 + s_3 + s_4 \geq 1$, \\
%$ s_1 + s_2 + \frac{\delta - f_2}{1- f_2} s_3 + s_4 \geq 1 $,\\ 
%$ s_1 + s_2 + s_3 + \frac{\epsilon+f_1}{f_1} s_4 \geq 1$, \\
%$\frac{\theta+f_2}{f_2}s_1 + s_2 + s_3 + s_4 \geq 1,$ \\ $s \in \mathbb{R}^4_+$
%\end{align}

The dual of this LP is given by
\begin{align}
\max \hspace{4pt}   \sum_{i = 1}^3 w_i  \hspace{4pt} s.t.  \nonumber \\
\frac{\alpha - f_1}{1- f_1}w_1 + w_2 + w_3  \leq 1, \\
 w_1 + \frac{\delta - f_2}{1- f_2}w_2 + w_3  \leq 1,   \nonumber\\
 w_1 + w_2 + \frac{\epsilon+f_1}{\theta + f_1}w_3  \leq 1  \nonumber, \\ 
w \in \mathbb{R}^3_+  \nonumber.
\end{align}
\vspace {4pt}
The dual optimal solution is given by solving 
\begin{align}
 \frac{\alpha - f_1}{1- f_1}w_1 + w_2 + w_3  = 1, \\
 w_1 + \frac{\delta - f_2}{1- f_2}w_2 + w_3 =  1,   \nonumber\\
 w_1 + w_2 + \frac{\epsilon+f_1}{\theta + f_1}w_3  =  1  \nonumber.
\end{align} 
Let
\begin{equation}
1 + a = \frac{\alpha - f_1}{1- f_1}, 1+b = \frac{\delta - f_2}{1- f_2}, 1+c =  \frac{\epsilon+f_1}{\theta + f_1}.
\end{equation}
\\where  $ a >0, b>0, c>0, d>0$. Then this system of equations is of the form
\begin{align}
(1+a)w_1 + w_2 + w_3  = 1,  \nonumber \\
 w_1 + (1+b)w_2 + w_3  = 1 ,   \\
 w_1 + w_2 + (1+c)w_3  = 1   \nonumber.
\end{align} 
Solving this system of equations gives the optimal objective function value as
\begin{equation}
w_1 + w_2 + w_3 =
 1 - \Bigg( \dfrac{1}{1 +\dfrac{1}{a} + \dfrac{1}{b} + \dfrac{1}{c}}\Bigg).
\end{equation}
We have  that
\begin{equation}
\dfrac{1}{a} + \dfrac{1}{b} + \dfrac{1}{c}  = 
\dfrac{1-f_1}{\alpha - 1}+ \dfrac{1-f_2}{\delta - 1} +  \dfrac{\theta + f_1}{ \epsilon-\theta}.
\end{equation}\\
Firstly, it is  worth noting that the denominator of each term is simply the distance of  a corner point of the Type-3 triangle from the point of intersection of the side  of the corresponding Type-2 triangle (which cuts off a part of the Type 3 triangle including the corner point) and the corner ray from the fractional solution to the corner point. The numerator is the distance of the  fractional solution $f$ from this intersection point. Let the slope of the line joining $(\alpha,\beta)$ and $(\gamma, \delta)$ be $-u$, line joining $(\gamma, \delta)$ and $(-\epsilon, \zeta)$ be $w$, line joining $(-\epsilon, \zeta)$ and $(\alpha,\beta)$ be $-v$. From Theorem \ref{Bsau_refl} , it follows that  $u>1$, $,0<v<1$ and $w>0$.
\begin{lemma}
\begin{equation}
\dfrac{1-f_1}{\alpha - 1}+ \dfrac{1-f_2}{\delta - 1} +  \dfrac{\theta + f_1}{ \epsilon-\theta}
 =  \\ (1-f_1) (\frac{u}{v} - 1) + (1-f_2) \dfrac{u+w}{w(u-1)} + \dfrac{(f_1+f_2)(v+w)}{1-v}.
\end{equation}
\label{app_proof_1}
\end{lemma}
\begin{proof}
See Appendix A.1.
\end{proof}

\begin{figure}
\begin{center}
\includegraphics[height=15cm]{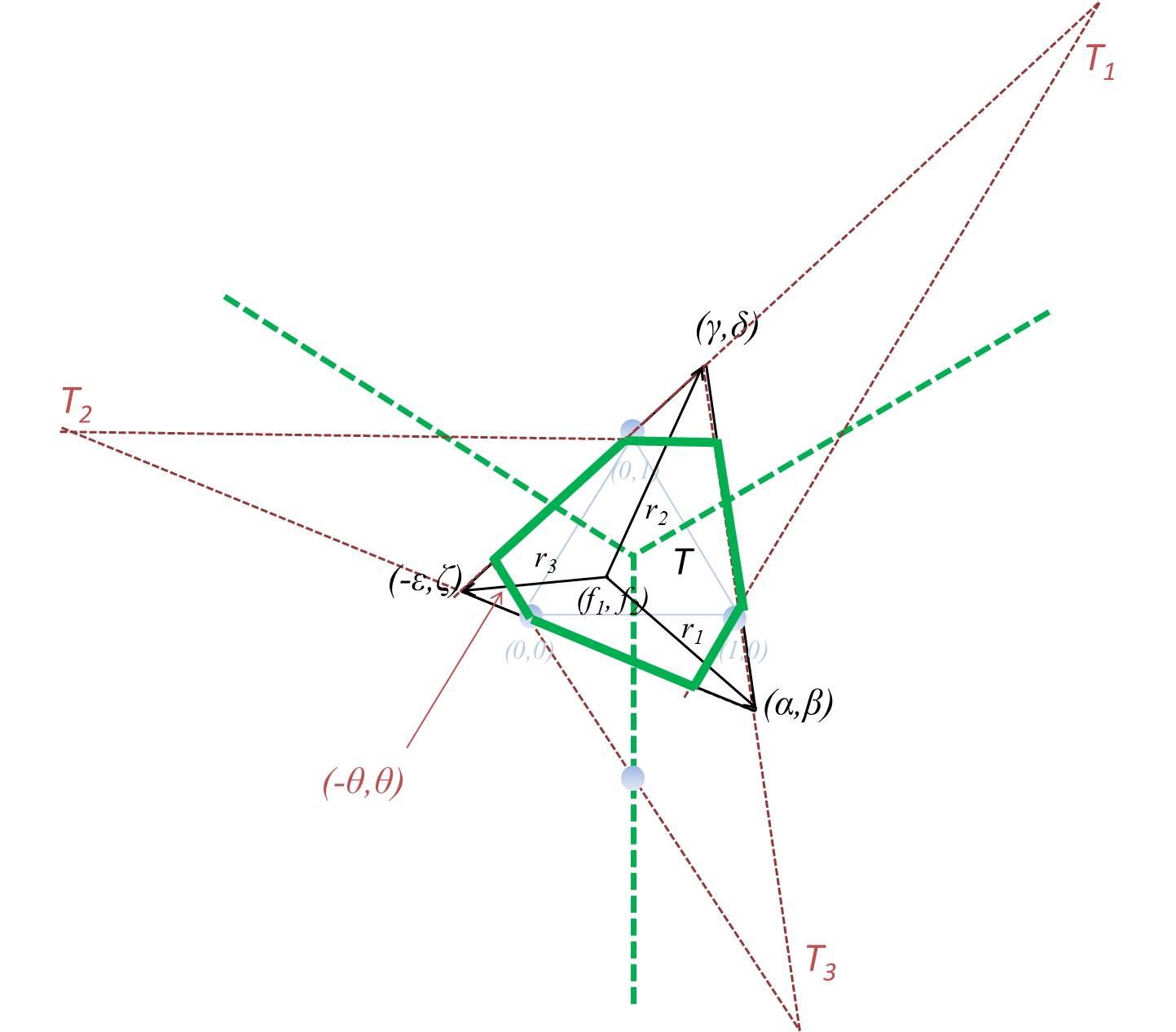}
\caption{Breakup of cases in our analysis.}
\label{pic_QC_UB_2}
\end{center}
\end{figure}

We prove the following result which gives us Theorem  \ref{QC_YPA_new}  and hence the improved bound.
\begin{lemma}
Let $f$ belong to  the region in a Type-3 triangle lying in all three of the Type-2 triangles used for approximation.  Then \\
\begin{equation}
 \dfrac{1-f_1}{\alpha - 1}+ \dfrac{1-f_2}{\delta - 1} +  \dfrac{\theta + f_1}{ \epsilon-\theta} \geq \sqrt{3}. \nonumber
\end{equation}
\label{FA_QC_UB}
\end{lemma}
\begin{proof}
Case 1 is given by the region bounded by six lines i.e.
i.e. $x_2  \leq  w x_1 + 1$, $x_2 \leq  1$, $x_2  \leq   -u(x_1 - 1)$, $x_1 \leq  1$, $x_2 \geq  -vx_1$ and $x_2 \geq -x_1$.
By rotational symmetry (see the dotted green lines in Figure \ref{pic_QC_UB_2}), it suffices to examine the region, say $Q$, given by 
$x_1 \leq \dfrac{1}{2}$, $x_2 \geq -vx_1$, $x_2 \geq -x_1$, $x_2  \leq  w x_1 + 1$ and $x_2  \leq  -\dfrac{1}{2}(x_1 - 1)$. 
This region $Q$ after rotations by 120 degrees and 240 degrees covers the whole Type-3 triangle.

\end{proof}
This  gives us Lemma \ref{QC_UB_L1}.
\end{proof}
Lemmas \ref{QC_UB_Case2} and \ref{QC_UB_L1} give us Theorem  \ref{QC_YPA_new}. \qed

\section{Generalization of the lower bound for the approximation of RCP by the triangle closure}
\paragraph{}
In this section, we address the problem of finding a lower bound on the approximation parameter $\alpha_T$ for the approximation of the RCP by the triangle closure. To achieve this, we attempt to find a point in the triangle closure which requires the largest scaling for a valid inequality (of the RCP) defined by a maximal lattice-free quadrilateral such that the point then satisfies the scaled inequality. The best bound known to us in literature - published or unpublished, is due to Cornu{\'e}jols \emph{et al.}~\cite{LB10} and  is 1.125, meaning $\alpha_T \geq 1.125$. The maximal lattice-free quadrilateral (say Q) chosen by them has the vertices $v^1 = f+r_1 = (1.4,0.8)$, $ v^2 = f+r_2 =  (0.8,-0.4)$, $v^3 = f+r_3 = (-0.4, 0,2)$, $v^4 =f+r_4 =  (0.2, 1.4)$ with the integral points (0,0), (0,1), (1,0) and (1,1) lying on the boundary of Q and $f = (0.5, 0,5)$. Q defines the valid inequality $\sum_{j=1}^{4} s_j \geq 1$ for $R_f(r_1,\dots, r_k) $. The point chosen by them is $\bar{s}= (\frac{2}{9}, \frac{2}{9}, \frac{2}{9}, \frac{2}{9})$ which can be proved to lie in the triangle closure but which requires a scaling of 1.125 of  the valid inequality defined by Q in order that $ \bar{s}$ satisfies the transformed inequality. The interested reader may refer to Cornu{\'e}jols \emph{et al.}~\cite{LB10} for further details.

Here, we generalize the result to prove that 1.125 is the best bound obtainable from an infinite subclass of maximal lattice-free quadrilaterals which are symmetric w.r.t. the four lattice points - namely  (0,0), (0,1), (1,0) and (1,1), have these four points on their boundary and have $f = (0.5, 0.5)$. More formally, we prove the following theorem.

\begin{theorem}

Let S be the set of minimal valid inequalities defined by the subclass (see Figure \ref{TC_LB_1}) of maximal lattice-free quadrilaterals (say G), with $f = (0.5, 05)$, having vertices given by $f+r_1 =  (1+a, 1-b), f+r_2 = (1-b, -a), f+r_3 = (-a, b), f+r_4 = (b, 1+a)$ for some $a, b >0 $ with the relation $a^2 = b(1-b)$. Let $ X = T_f (r_1,\dots, r_k) \cap S$.
Then $\min (\alpha >0:   T_f (r_1,\dots, r_k)  \subseteq \alpha X)$ = 1.125.
\end{theorem}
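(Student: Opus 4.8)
The plan is to use Goemans' theorem (Theorem~\ref{goemans}) to reduce the computation of $\min(\alpha > 0 : T_f(r_1,\dots,r_k) \subseteq \alpha X)$ to a single optimization over the one-parameter family $G$. Since $X = T_f \cap S$ and $T_f \subseteq X$ trivially, the only inequalities that can force $\alpha > 1$ are the minimal valid inequalities in $S$ coming from the quadrilaterals $G$; each such inequality has the form $\sum_{j=1}^4 \psi_G^f(r_j) s_j \geq 1$, and after rescaling each ray $r_j$ so that $f + r_j$ lies on the boundary of $G$ (exactly as done in Section~3, using positive homogeneity of $\psi$), this becomes $\sum_{j=1}^4 s_j \geq 1$ with the four corner rays being the vertices $v^1,\dots,v^4$ of $G$. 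By the corner-ray reduction (Theorem stated after Theorem~2.9), it suffices to work with these four rays. So the required $\alpha$ is $\sup_{a,b}\; 1/\big(\inf_s \{\sum_{j=1}^4 s_j : s \in T_f(v^1-f,\dots,v^4-f)\}\big)$, where the inner infimum is the value of the RCP/triangle-closure LP along the quadrilateral direction.

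Next I would pin down the inner LP value for a fixed $(a,b)$ with $a^2 = b(1-b)$. The upper direction ($\le$, giving the lower bound $1.125$ on the overall $\alpha$) is the explicit-point construction: exhibit a point $\bar s$ in $T_f$ with $\sum \bar s_j$ small. The natural candidate, generalizing Cornu\'ejols \emph{et al.}~\cite{LB10}, is a symmetric point $\bar s = (t,t,t,t)$; I would verify $\bar s \in T_f$ by checking that for \emph{every} maximal lattice-free triangle $T$ containing $f = (0.5,0.5)$ in its interior one has $\sum_{j=1}^4 \psi_T^f(v^j - f)\, t \geq 1$. By symmetry of the configuration (the quadrilateral and the four lattice points are invariant under the $90^\circ$ rotation about $f$), it is enough to consider triangles up to this symmetry, and one reduces to checking the worst Type-2 / Type-3 triangles; this should give exactly $t = \tfrac{4}{4.5} \cdot \tfrac14$-type value forcing $1/(4t) = 1.125$ independent of $(a,b)$. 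The reverse direction ($\geq$) requires showing no point of $T_f$ does better, i.e.\ that the LP value is never below $4t$; here I would use the dual — produce dual multipliers on a small finite set of triangles (splits and the extreme Type-2 triangles through the lattice points) whose weighted combination certifies $\sum s_j \geq 4t$ for all $s \in T_f$. The constraint $a^2 = b(1-b)$ is precisely what keeps the four given vertices a genuine maximal lattice-free quadrilateral (each lattice point interior to an edge), so it should drop out of the final bound.

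The main obstacle I anticipate is the reverse inequality — proving the triangle closure cannot beat $1.125$ \emph{uniformly} over the whole family $G$. For a single quadrilateral (the $(a,b)$ of \cite{LB10}) one can get away with an ad hoc dual certificate, but here the dual multipliers and the relevant "tight" triangles vary with $(a,b)$, so one needs either a parametric family of dual certificates or a clean geometric argument (e.g.\ covering the quadrilateral's direction cone by triangle cuts whose coefficients sum correctly). A secondary difficulty is handling the full infinite collection of maximal lattice-free triangles when checking $\bar s \in T_f$; I would manage this by invoking Theorem~\ref{Lov_MLFCS} and Theorem~2.3 to reduce to finitely many combinatorial types and then to a low-dimensional optimization in the triangle's parameters, exploiting the fourfold symmetry to cut the case analysis down.
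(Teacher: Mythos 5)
Your overall strategy---reduce via Goemans' theorem to the quadrilateral inequalities, exhibit a symmetric point $(t,t,t,t)$ in the triangle closure, and compute the scaling it forces---is the same skeleton the paper uses. But there is a genuine error in your plan: you assert that the value $t$ and the resulting factor $1/(4t)=1.125$ should come out \emph{independent of} $(a,b)$ and that the constraint $a^2=b(1-b)$ ``drops out.'' It does not. In the paper the symmetric point is $\bar s=(m,m,m,m)$ with $m$ an explicit function of $b$ (built from the coordinates $\lambda_1,\lambda_2$ of the lattice points in the bases formed by consecutive corner rays, and from two thresholds on $\psi_B(r_1)$ and $\psi_B(r_3)$), and the scaling factor $q(b)$ it forces varies over the family: e.g.\ at $a=b=0.5$ (the $45^\circ$-rotated square) the splits $0\le x_1\le 1$ and $0\le x_2\le 1$ already force $4t\ge 1$, so $q=1$ and no improvement is possible, while $q(b)$ peaks at $1.125$ only at $b=0.2$, $a=0.4$ --- which is exactly the single quadrilateral of Cornu\'ejols \emph{et al.} The content of the theorem is that the \emph{maximum over the family} is $1.125$, i.e.\ the infinite subclass does no better than that one example; a proof aiming to show a uniform $1.125$ for every member would fail at the first non-trivial instance.

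Two further points. First, the verification that $\bar s$ lies in the triangle closure is the technical heart of the paper's argument and you only gesture at it (``reduce to the worst Type-2/Type-3 triangles''); the paper instead runs a two-case analysis on an arbitrary maximal lattice-free triangle $B$ --- according to whether $\psi_B(r_3)$ exceeds a geometric threshold --- using the fact that each lattice point must lie on the far side of (or on) an edge of $B$, expressed in the $(\lambda_1,\lambda_2)$ coordinates, together with the lower bound $\psi_B(r_1)\ge(0.5+a)/0.5$ coming from the separating edge. Second, your worry about the reverse direction (that no point of $T_f$ beats the symmetric one, uniformly in $b$) is legitimate, but your proposed fix --- parametric dual certificates --- is not what the paper does; the paper closes the argument by maximizing the explicit $b$-dependent expression $q(b)$ and checking the maximum is $1.125$ at $b=0.2$. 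If you pursue your route you would still need the $b$-dependent construction before any dual certificate could be stated.
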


\begin{figure}
\begin{center}
 \includegraphics[height=9cm]{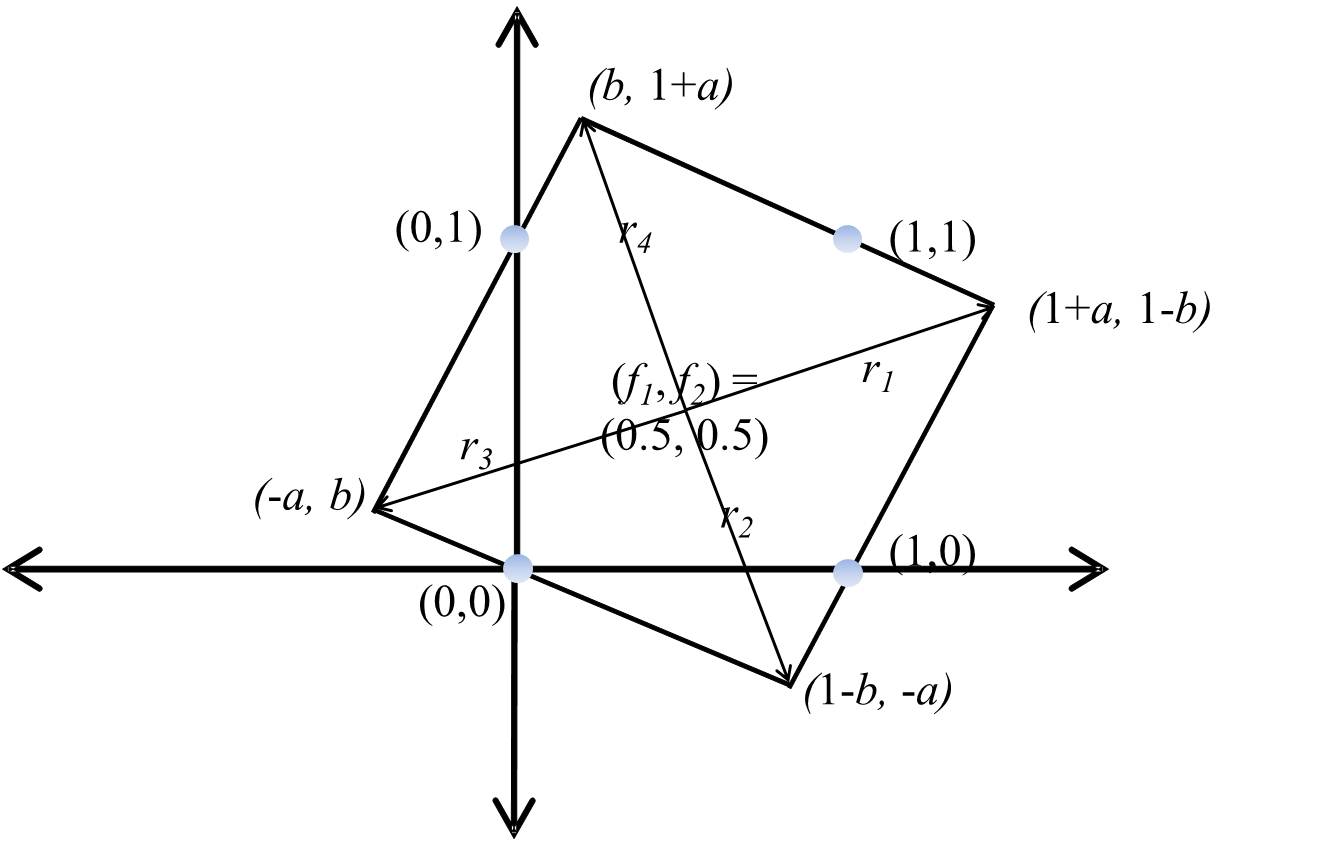}
\caption{A general quadrilateral of our subclass.}
\label{TC_LB_1}
\end{center}
\end{figure}
\begin{proof}Let $Q$ be any maximal lattice-free quadrilateral in G.  Q defines the valid inequality 
\begin{equation}
\sum_{j=1}^{4} s_j \geq 1.
\end{equation}
 for $R_f(r_1,\dots, r_k) $. Let B be any maximal lattice-free triangle.
Amongst the edges of B, atleast one edge has two of the points amongst (0,0), (0,1),(1,0) and (1,1) on one side and $f$ on the opposite side. Without loss of generality, this edge has (1,0) and (1,1) on one side and $f$ on the other side (Cornu{\'e}jols \emph{et al.}~\cite{LB10}). This implies 
\begin{equation}
\psi_B(r_1) \geq \frac{0.5+a}{0.5} = \text{threshold}_1 \hspace{4pt}.
\end{equation} 
Treating $(r_1, r_4)$ as the basis, let $(\lambda_1, \lambda_2)$ be the co-ordinates of (1,1). Then it is easily checked that $(\lambda_1, \lambda_2)$ are also the co-ordinates of (1,0) with respect to the basis $(r_2, r_1)$,  (0,0) with respect to the basis $(r_3, r_2)$ and (0,1) with respect to the basis $(r_4, r_3)$. It is also easily checked that \\
\begin{equation}
\lambda_1 = \frac{0.5(1+a-b)}{(0.5+a)^2+(b-0.5)^2}, \hspace{4pt}
\lambda_2 = \frac{0.5(a+b)}{(0.5+a)^2+(b-0.5)^2}.
\end{equation}

Let $f+r_2$ intersect $x_1 = 1$ at P (see Figure \ref{TC_LB_2}). Let the line joining P with origin intersect $f+r_3$ at $P'$. It can be easily checked that the x co-ordinate of $P'$ is given by 
\begin{equation} 
\frac{(a+b)(2b-1)}{(2b-1)^2+(2a+1)(a+b)}.
\end{equation}
Let
\begin{equation}
\text{threshold}_2 = \dfrac{0.5 + a}{0.5 - \dfrac{(a+b)(2b-1)}{((2b-1)^2+(2a+1)(a+b)}}.
\end{equation}
Let
 \begin{equation} 
m = \frac{\lambda_1}{( 2 + (\lambda_1 - \lambda_2)(\text{threshold}_1 + \text{threshold}_2) )}.
\end{equation}
\begin{figure}
\begin{center}
 \includegraphics[height=9cm]{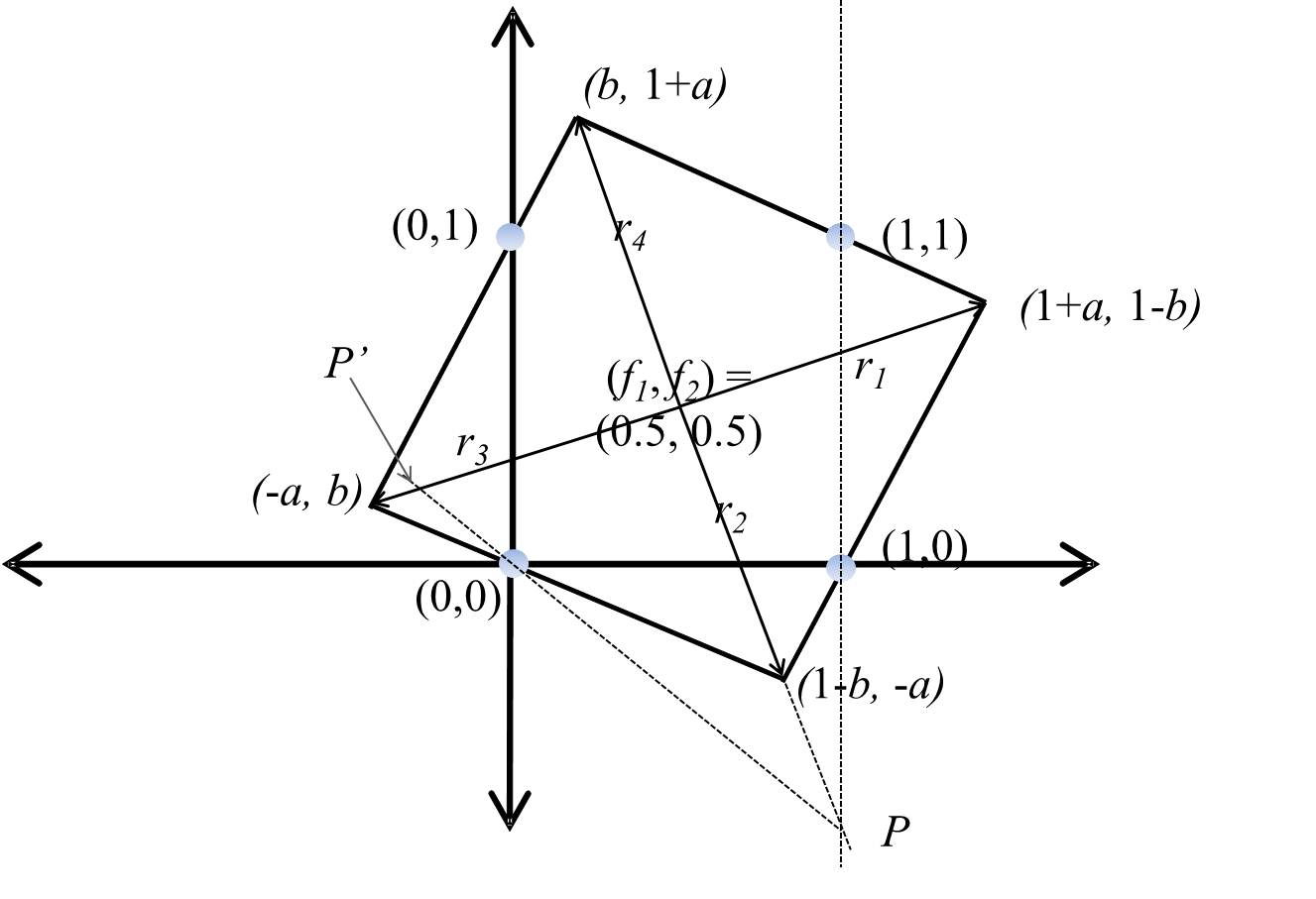}
\caption{A construction which determines the cases for our proof.}
\label{TC_LB_2}
\end{center}
\end{figure}
Let the point $\bar{s}$ be defined as 
\begin{equation}
\bar{s} = (m,m,m,m).
\end{equation}
We now follow two cases for our analysis. Case 1 assumes $\psi_B(r_3) \geq \text{threshold}_2$ and Case 2 assumes $\psi_B(r_3) < \text{threshold}_2$.

Case 1: We note that (1,0) has to be above the line $\psi_B(r_1)z_1 + \psi_B(r_2)z_2 = 1$, where $(z_1, z_2)$ denote the co-ordinates of (1,0) with respect to the basis $(r_1,r_2)$. This implies
\begin{equation}
\lambda_2  \psi_B(r_1) + \lambda_1 \psi_B(r_2) \geq 1 .
\label{eq:ref1}
\end{equation}
Similarly,
\begin{equation}
\lambda_2 \psi_B(r_3) + \lambda_1  \psi_B(r_4) \geq 1.
\label{eq:ref2}
\end{equation}
From equations (\ref{eq:ref1}) and  (\ref{eq:ref2}) and noting that $\lambda_1 \geq \lambda_2$, we have 
\begin{equation}
\lambda_1  (\psi_B(r_1) + \psi_B(r_2) + \psi_B(r_3)+ \psi_B(r_4)) \geq 2 + (\lambda_1 - \lambda_2)( \psi_B(r_1) +  \psi_B(r_3)) .
\end{equation}
Hence
\begin{equation}
\lambda_1  (\psi_B(r_1) + \psi_B(r_2) + \psi_B(r_3)+ \psi_B(r_4)) \geq 2 + (\lambda_1 - \lambda_2)(\text{threshold}_1 + \text{threshold}_2) .
\end{equation}
Hence
\begin{equation}
\frac{\lambda_1}{( 2 + (\lambda_1 - \lambda_2)(\text{threshold}_1 + \text{threshold}_2 ))}(\psi_B(r_1) + \psi_B(r_2) + \psi_B(r_3)+ \psi_B(r_4)) \geq 1.
\label{eq:ref3}
\end{equation}

From equation (\ref{eq:ref3}), $\bar{s}$ satisfies the minimal inequality defined by the maximal lattice-free triangle $B$. Hence $\bar{s}$  lies in the triangle closure. However, to satisfy the quadrilateral inequality $  \sum_{j=1}^{4} s_j \geq 1$ defined by Q, we need to scale $\bar{s}$ by 
\begin{equation}
q =  \frac{( 2 + (\lambda_1 - \lambda_2)(\text{threshold}_1 + \text{threshold}_2 )}{4\lambda_1}
\end{equation}
\begin{equation}
= \big((2\sqrt{(b(1-b))}+1)^2 + (2b-1)^2\big)\frac{(\sqrt{(b(1-b))}-3b+2)}{4(\sqrt{b(1-b)} - b+1)^2}.
\label{eq:refbound}
\end{equation}

Case 2: As in Cornu{\'e}jols \emph{et al.}~\cite{LB10}, without loss of generality, we can assume that $w$ is a vertex of $B$ and that the line separating (1,0), (1,1) from $f$ is $x_1 = 1$. Hence, 
\begin{equation}
\psi_B(r_1) = \frac{0.5+a}{0.5} .
\label{eq:ref4}
\end{equation}
The maximality of B implies that each of (0,0) and (1,1) lies on a side of B. Hence
\begin{equation}
\lambda_1  \psi_B(r^3) + \lambda_2  \psi_B(r^2)  = 1.
\label{eq:ref5}
\end{equation}
\begin{equation}
 \lambda_1  \psi_B(r^4) + \lambda_2  \psi_B(r^3) = 1.
\label{eq:ref6}
\end{equation}
It can easily be checked that equations (\ref{eq:ref4}), (\ref{eq:ref5}), (\ref{eq:ref6}) imply equation (\ref{eq:ref3}). Hence, in this case as well, $\bar{s}$  lies in the triangle closure and needs to be scaled by $q$ in order to satisfy the quadrilateral inequality $  \sum_{j=1}^{4} s_j \geq 1$ defined by Q.

From Cases 1 and 2, we obtain the bound $q$ given by equation (\ref{eq:refbound}). It is not difficult to confirm that the global maximum for this function of $b$  is 1.125 at $b = 0.2$. This finishes the proof.
\end{proof}

The above result implies that for the structure $X$ defined by the intersection of the triangle closure and the set $S$ of minimal valid inequalities from maximal lattice-free quadrilaterals in the above subclass $G$, the  approximation factor for approximation by the triangle closure is known to be exactly 1.125.

\section{Conclusions}
\paragraph{}
In this paper, for the two dimensional case, we improve on the state-of-the-art upper bound for the approximation of the Relaxed Corner Polyhedron (RCP)  purely by the quadrilateral closure, from 2 to 1.71. We  generalize the state-of-the-art lower bound of 1.125 for the approximation of the RCP purely by the triangle closure, to an infinite subclass of maximal lattice-free quadrilaterals.
%$
%
%
%
%%After some bit of co-ordinate geometry, one obtains the expression for the bound as 
%%$((2\sqrt{(b*(1-b))+1)^2 + (2b-1)^2)(sqrt(b*(1-b))-3b+2)/(4*(sqrt(b*(1-b)) - b+1)^2
%%for which the global maximum is 1.125 at b = 0.2
%

%\Section{Conclusions}

\section{Acknowledgements}
\paragraph{}
The author would like to thank G\'{e}rard Cornu\'{e}jols for helpful discussions.

\bibliographystyle {abbrv}
\bibliography {bibtex_yogesh}

\section{Appendix A.1}

Here, we prove Lemma \ref{app_proof_1}.
\begin{proof}
\begin{equation}
x_2 - 0 = -u(x_1-1) \text{ gives } \beta = -u(\alpha -1).
\end{equation}
\begin{equation}
x_2= -vx_1 \text{ gives } \beta = -v\alpha.
\end{equation}
\begin{equation}
\text{Hence } \alpha (u - v) = u \text{ i.e. } \alpha = \frac{u}{u-v} \text{ i.e. } \alpha - 1= \frac{v}{u-v} \text{ i.e. }\frac{1}{\alpha-1} = \frac{u-v}{v} = \frac{u}{v} - 1. 
\end{equation}
\begin{equation}
x_2 -1 = w(x_1-0) \text{ gives } \delta - 1 = w\gamma. 
\end{equation}
\begin{equation}
x_2 - 0 = -u(x_1-1) \text{ gives } \delta = -u(\gamma -1). 
\end{equation}
\begin{equation}
\text{ Hence } \gamma =\frac{ \delta - 1 }{w} = -\frac{\delta}{u} +1 \text{ i.e. }
\delta = \frac{u(w+1)}{u+w} \text{ i.e. }
\delta - 1 = \frac{w(u-1)}{u+w} \text{ i.e. } \frac{1}{\delta - 1} = \frac{u+w}{w(u-1)}. 
\end{equation}
\begin{equation}
x_1 = -vx_1 \text{ gives } \zeta = v\epsilon. 
\end{equation}
\begin{equation}
x_2 -1 = w(x_1-0) \text{ gives } \zeta -1 = -w\epsilon. 
\end{equation}
\begin{equation}
\text{ Hence } \zeta = v\epsilon = -w\epsilon + 1 \text{ i.e. } \epsilon = \frac{1}{v +w} \text{ and } \zeta = \frac{v}{v +w}. 
\end{equation}
\begin{equation}
\text{ The line passing through } (f_1, f_2) \text{, } (-\epsilon, \zeta) \text{ is } 
(\theta - f_2) = \Bigg( \dfrac{f_2 - \dfrac{v}{ v+w}}{f_1 + \dfrac{1}{v+w}} \Bigg)(-\theta - f_1).
\end{equation}
\begin{equation}
\text{ Thus } \theta \Bigg( 1+ \dfrac{f_2 - \dfrac{v}{ v+w}}{f_1 + \dfrac{1}{v+w}}\Bigg) 
= f_2 - f_1 \Bigg( \dfrac{f_2 - \dfrac{v}{ v+w}}{f_1 + \dfrac{1}{v+w}} \Bigg). 
\end{equation}
\begin{equation}
\text{ Thus } \theta \Bigg( \dfrac{f_1 + \dfrac{1}{v+w}+ f_2 - \dfrac{v}{ v+w}}{f_1 + \dfrac{1}{v+w}}\Bigg) 
= \dfrac{f_2 \Big( f_1 + \dfrac{1}{v+w}\Big) - f_1 \Big( f_2 - \dfrac{v}{ v+w}\Big)}{f_1 + \dfrac{1}{v+w}}. 
\end{equation}
\begin{equation}
\text{ Thus } \theta \Bigg(\dfrac{(f_1 + f_2)(v+w) + (1-v)}{ v+w}\Bigg) 
= \dfrac{f_1v+f_2}{v+w}.
\end{equation}
\begin{equation}
\text{ Thus }
\theta = \dfrac{f_1 v + f_2}{(f_1+f_2)(v+w)+(1-v)}.
\end{equation}
\begin{equation}
\text{ Hence } 
\dfrac{ \theta + f_1}{\epsilon - \theta} = 
\dfrac{\dfrac{f_1 v + f_2}{(f_1+f_2)(v+w)+(1-v)} + f_1}
{\dfrac{1}{v+w} - \dfrac{f_1 v + f_2}{(f_1+f_2)(v+w)+(1-v)}}.
\end{equation}
\begin{equation}
= \dfrac{(v+w)\Bigg(f_1 v + f_2 + f_1\Big((f_1+f_2)(v+w)+(1-v)\Big)\Bigg)}
{(f_1+f_2)(v+w)+(1-v) - (f_1 v + f_2)(v+w) }.
\end{equation}
\begin{equation}
= \dfrac{(v+w)\Bigg(f_2 \Big(1+f_1(v+w)\Big) + f_1^2(v+w) + f_1 \Bigg)}
{f_1(v+w)(1-v)+(1-v) }. 
\end{equation}
\begin{equation}
\text{ i.e. } \dfrac{ \theta + f_1}{\epsilon - \theta} 
= \dfrac{(v+w)(f_1 +f_2) \Big(1+f_1(v+w)\Big) }.
{(1-v)\Big(1+f_1(v+w)\Big) }.
\end{equation}
\begin{equation}
\text{ Hence } 
\dfrac{ \theta + f_1}{\epsilon - \theta} \text = \dfrac{(f_1+f_2)(v+w)}{1-v}.
\end{equation}
\begin{equation}
\text{ Thus }
\dfrac{1-f_1}{\alpha - 1}+ \dfrac{1-f_2}{\delta - 1} + \dfrac{\theta + f_1}{ \epsilon-\theta}
= (1-f_1) (\frac{u}{v} - 1) + (1-f_2) \dfrac{u+w}{w(u-1)} + \dfrac{(f_1+f_2)(v+w)}{1-v}.
\end{equation}
\end{proof}

\end{document}